\documentclass[oneside, 12pt]{amsart}
\pdfoutput=1
\makeatletter
\let\origsection=\section \def\section{\@ifstar{\origsection*}{\mysection}}
\def\mysection{\@startsection{section}{1}\z@{.7\linespacing\@plus\linespacing}{.5\linespacing}{\normalfont\scshape\centering\S}}
\makeatother

\usepackage{amsmath,amssymb,amsthm}
\usepackage{mathrsfs}
\usepackage{mathabx}\changenotsign
\usepackage{dsfont}
\usepackage{bbm}

\usepackage{xcolor}
\usepackage[backref=section]{hyperref}
\usepackage[ocgcolorlinks]{ocgx2}
\hypersetup{
	colorlinks=true,
	linkcolor={red!60!black},
	citecolor={green!60!black},
	urlcolor={blue!60!black},
}

\usepackage[open,openlevel=2,atend]{bookmark}

\usepackage[abbrev,msc-links,backrefs]{amsrefs}
\usepackage{doi}

\renewcommand{\PrintDOI}[1]{\doi{#1}}

\usepackage[T1]{fontenc}
\usepackage{lmodern}
\usepackage[babel]{microtype}
\usepackage[english]{babel}

\usepackage{geometry}
\numberwithin{equation}{section}
\numberwithin{figure}{section}

\usepackage{enumitem}

\let\polishlcross=\l
\def\l{\ifmmode\ell\else\polishlcross\fi}

\let\emptyset=\varnothing
\let\setminus=\smallsetminus

\makeatletter
\def\moverlay{\mathpalette\mov@rlay}
\def\mov@rlay#1#2{\leavevmode\vtop{   \baselineskip\z@skip \lineskiplimit-\maxdimen
		\ialign{\hfil$\m@th#1##$\hfil\cr#2\crcr}}}
\newcommand{\charfusion}[3][\mathord]{
	#1{\ifx#1\mathop\vphantom{#2}\fi
		\mathpalette\mov@rlay{#2\cr#3}
	}
	\ifx#1\mathop\expandafter\displaylimits\fi}
\makeatother

\newcommand{\dcup}{\charfusion[\mathbin]{\cup}{\cdot}}

\DeclareFontFamily{U}  {MnSymbolC}{}
\DeclareSymbolFont{MnSyC}         {U}  {MnSymbolC}{m}{n}
\DeclareFontShape{U}{MnSymbolC}{m}{n}{
	<-6>  MnSymbolC5
	<6-7>  MnSymbolC6
	<7-8>  MnSymbolC7
	<8-9>  MnSymbolC8
	<9-10> MnSymbolC9
	<10-12> MnSymbolC10
	<12->   MnSymbolC12}{}
\DeclareMathSymbol{\powerset}{\mathord}{MnSyC}{180}

\usepackage{tikz}
\usetikzlibrary{calc,decorations.pathmorphing}
\usetikzlibrary{arrows,decorations.pathreplacing}
\pgfdeclarelayer{background}
\pgfdeclarelayer{foreground}
\pgfdeclarelayer{front}
\pgfsetlayers{background,main,foreground,front}

\usepackage{multicol}
\usepackage{subcaption}
\newcommand{\pedge}[9]{
	
	\ifx\relax#6\relax
	\def\qoffs{0pt}
	\else
	\def\qoffs{#6}
	\fi
	
	\def\phedge{
		($#1+#5!\qoffs!-90:#2-#5$) -- 
		($#2+#1!\qoffs!-90:#3-#1$) -- 
		($#3+#2!\qoffs!-90:#4-#2$) -- 
		($#4+#3!\qoffs!-90:#5-#3$) -- 
		($#5+#4!\qoffs!-90:#1-#4$) -- cycle}

	\coordinate (12) at ($#1!\qoffs!90:#2$);
	\coordinate (15) at ($#1!\qoffs!-90:#5$);
	\coordinate (23) at ($#2!\qoffs!90:#3$);
	\coordinate (21) at ($#2!\qoffs!-90:#1$);
	\coordinate (34) at ($#3!\qoffs!90:#4$);
	\coordinate (32) at ($#3!\qoffs!-90:#2$);
	\coordinate (45) at ($#4!\qoffs!90:#5$);
	\coordinate (43) at ($#4!\qoffs!-90:#3$);
	\coordinate (51) at ($#5!\qoffs!90:#1$);
	\coordinate (54) at ($#5!\qoffs!-90:#4$);

	\def\nphedge{
		(15) let \p1=($(15)-#1$), \p2=($(12)-#1$) in 
		arc[start angle={atan2(\y1,\x1)}, delta angle={atan2(\y2,\x2)-atan2(\y1,\x1)-360*(atan2(\y2,\x2)-atan2(\y1,\x1)>0)}, x radius=\qoffs, y radius=\qoffs] --
		(21) let \p1=($(21)-#2$), \p2=($(23)-#2$) in 
		arc[start angle={atan2(\y1,\x1)}, delta angle={atan2(\y2,\x2)-atan2(\y1,\x1)-360*(atan2(\y2,\x2)-atan2(\y1,\x1)>0)}, x radius=\qoffs, y radius=\qoffs] --
		(32) let \p1=($(32)-#3$), \p2=($(34)-#3$) in 
		arc[start  angle={atan2(\y1,\x1)}, delta angle={atan2(\y2,\x2)-atan2(\y1,\x1)-360*(atan2(\y2,\x2)-atan2(\y1,\x1)>0)}, x radius=\qoffs, y radius=\qoffs] --
		(43) let \p1=($(43)-#4$), \p2=($(45)-#4$) in 
		arc[start angle={atan2(\y1,\x1)}, delta angle={atan2(\y2,\x2)-atan2(\y1,\x1)-360*(atan2(\y2,\x2)-atan2(\y1,\x1)>0)}, x radius=\qoffs, y radius=\qoffs] --
		(54) let \p1=($(54)-#5$), \p2=($(51)-#5$) in 
		arc[start angle={atan2(\y1,\x1)}, delta angle={atan2(\y2,\x2)-atan2(\y1,\x1)-360*(atan2(\y2,\x2)-atan2(\y1,\x1)>0)}, x radius=\qoffs, y radius=\qoffs] --
		cycle}

	\ifx\relax#7\relax
	\def\plwidth{1pt}
	\else
	\def\plwidth{#7}
	\fi
	
	\ifx\relax#9\relax
	\fill \nphedge;
	\else
	\fill[#9]\nphedge;
	\fi
	
	\ifx\relax#8\relax
	\draw[line width=\plwidth,rounded corners=\qoffs]\nphedge;
	\else
	\draw[line width=\plwidth,#8]\nphedge;
	\fi
}

\newcommand{\qedge}[7]{
	
	\ifx\relax#4\relax
	\def\qoffs{0pt}
	\else
	\def\qoffs{#4}
	\fi
	
	\def\qhedge{
		($#1+#3!\qoffs!-90:#2-#3$) --
		($#2+#1!\qoffs!-90:#3-#1$) --
		($#3+#2!\qoffs!-90:#1-#2$) -- cycle}

	\coordinate (12) at ($#1!\qoffs!90:#2$);
	\coordinate (13) at ($#1!\qoffs!-90:#3$);
	\coordinate (23) at ($#2!\qoffs!90:#3$);
	\coordinate (21) at ($#2!\qoffs!-90:#1$);
	\coordinate (31) at ($#3!\qoffs!90:#1$);
	\coordinate (32) at ($#3!\qoffs!-90:#2$);
	
	\def\nqhedge{
		(13) let \p1=($(13)-#1$), \p2=($(12)-#1$) in
		arc[start angle={atan2(\y1,\x1)}, delta angle={atan2(\y2,\x2)-atan2(\y1,\x1)-360*(atan2(\y2,\x2)-atan2(\y1,\x1)>0)}, x radius=\qoffs, y radius=\qoffs] --
		(21) let \p1=($(21)-#2$), \p2=($(23)-#2$) in
		arc[start angle={atan2(\y1,\x1)}, delta angle={atan2(\y2,\x2)-atan2(\y1,\x1)-360*(atan2(\y2,\x2)-atan2(\y1,\x1)>0)}, x radius=\qoffs, y radius=\qoffs] --
		(32) let \p1=($(32)-#3$), \p2=($(31)-#3$) in
		arc[start angle={atan2(\y1,\x1)}, delta angle={atan2(\y2,\x2)-atan2(\y1,\x1)-360*(atan2(\y2,\x2)-atan2(\y1,\x1)>0)}, x radius=\qoffs, y radius=\qoffs] --
		cycle}
	
	\ifx\relax#5\relax
	\def\qlwidth{1pt}
	\else
	\def\qlwidth{#5}
	\fi
	
	\ifx\relax#7\relax
	\fill \nqhedge;
	\else
	\fill[#7]\nqhedge;
	\fi
	
	\ifx\relax#6\relax
	\draw[line width=\qlwidth,rounded corners=\qoffs]\nqhedge;
	\else
	\draw[line width=\qlwidth,#6]\nqhedge;
	\fi
}

\newcommand{\redge}[8]{
	
	\ifx\relax#5\relax
	\def\qoffs{0pt}
	\else
	\def\qoffs{#5}
	\fi
	
	\def\rhedge{
		($#1+#4!\qoffs!-90:#2-#4$) -- 
		($#2+#1!\qoffs!-90:#3-#1$) -- 
		($#3+#2!\qoffs!-90:#4-#2$) -- 
		($#4+#3!\qoffs!-90:#1-#3$) -- cycle}

	\coordinate (12) at ($#1!\qoffs!90:#2$);
	\coordinate (14) at ($#1!\qoffs!-90:#4$);
	\coordinate (23) at ($#2!\qoffs!90:#3$);
	\coordinate (21) at ($#2!\qoffs!-90:#1$);
	\coordinate (34) at ($#3!\qoffs!90:#4$);
	\coordinate (32) at ($#3!\qoffs!-90:#2$);
	\coordinate (41) at ($#4!\qoffs!90:#1$);
	\coordinate (43) at ($#4!\qoffs!-90:#3$);
	
	\def\nrhedge{
		(14) let \p1=($(14)-#1$), \p2=($(12)-#1$) in 
		arc[start angle={atan2(\y1,\x1)}, delta angle={atan2(\y2,\x2)-atan2(\y1,\x1)-360*(atan2(\y2,\x2)-atan2(\y1,\x1)>0)}, x radius=\qoffs, y radius=\qoffs] --
		(21) let \p1=($(21)-#2$), \p2=($(23)-#2$) in 
		arc[start angle={atan2(\y1,\x1)}, delta angle={atan2(\y2,\x2)-atan2(\y1,\x1)-360*(atan2(\y2,\x2)-atan2(\y1,\x1)>0)}, x radius=\qoffs, y radius=\qoffs] --
		(32) let \p1=($(32)-#3$), \p2=($(34)-#3$) in 
		arc[start angle={atan2(\y1,\x1)}, delta angle={atan2(\y2,\x2)-atan2(\y1,\x1)-360*(atan2(\y2,\x2)-atan2(\y1,\x1)>0)}, x radius=\qoffs, y radius=\qoffs] --
		(43) let \p1=($(43)-#4$), \p2=($(41)-#4$) in 
		arc[start angle={atan2(\y1,\x1)}, delta angle={atan2(\y2,\x2)-atan2(\y1,\x1)-360*(atan2(\y2,\x2)-atan2(\y1,\x1)>0)}, x radius=\qoffs, y radius=\qoffs] --
		cycle}
	
	\ifx\relax#6\relax
	\def\rlwidth{1pt}
	\else
	\def\rlwidth{#6}
	\fi
	
	\ifx\relax#8\relax
	\fill \nrhedge;
	\else
	\fill[#8]\nrhedge;
	\fi
	
	\ifx\relax#7\relax
	\draw[line width=\rlwidth,rounded corners=\qoffs]\nrhedge;
	\else
	\draw[line width=\rlwidth,#7]\nrhedge;
	\fi
}

\let\epsilon=\varepsilon
\let\eps=\epsilon
\let\rho=\varrho
\let\theta=\vartheta

\newcommand{\cC}{\mathcal{C}}
\newcommand{\cE}{\mathcal{E}}

\newcommand{\cT}{\mathcal{T}}

\newtheoremstyle{note}  {4pt}  {4pt}  {\sl}  {}  {\bfseries}  {.}  {.5em}          {}
\newtheoremstyle{introthms}  {3pt}  {3pt}  {\itshape}  {}  {\bfseries}  {.}  {.5em}          {\thmnote{#3}}
\newtheoremstyle{remark}  {2pt}  {2pt}  {\rm}  {}  {\bfseries}  {.}  {.3em}          {}

\theoremstyle{plain}
\newtheorem{theorem}{Theorem}[section]
\newtheorem{lemma}[theorem]{Lemma}

\theoremstyle{note}

\theoremstyle{remark}

\newtheorem{observation}[theorem]{Observation}

\newtheorem{proposition}[theorem]{Proposition}

\usepackage{lineno}
\newcommand*\patchAmsMathEnvironmentForLineno[1]{
	\expandafter\let\csname old#1\expandafter\endcsname\csname #1\endcsname
	\expandafter\let\csname oldend#1\expandafter\endcsname\csname end#1\endcsname
	\renewenvironment{#1}
	{\linenomath\csname old#1\endcsname}
	{\csname oldend#1\endcsname\endlinenomath}}
\newcommand*\patchBothAmsMathEnvironmentsForLineno[1]{
	\patchAmsMathEnvironmentForLineno{#1}
	\patchAmsMathEnvironmentForLineno{#1*}}
\AtBeginDocument{
	\patchBothAmsMathEnvironmentsForLineno{equation}
	\patchBothAmsMathEnvironmentsForLineno{align}
	\patchBothAmsMathEnvironmentsForLineno{flalign}
	\patchBothAmsMathEnvironmentsForLineno{alignat}
	\patchBothAmsMathEnvironmentsForLineno{gather}
	\patchBothAmsMathEnvironmentsForLineno{multline}
}

\usepackage{scalerel}

\newsavebox\vdegbox
\savebox\vdegbox{\tikz{
		\draw[black,fill=black] (90:1) circle (.35);
		\draw[black,line width=0.10cm] (210:1) circle (.30);
		\draw[black,line width=0.10cm] (330:1) circle (.30);
		\draw[opacity=0] (0:1.2) circle (0.1);
}}

\newsavebox\vvbox
\savebox\vvbox{\tikz{
		\draw[black,line width=0.10cm] (90:1) circle (.30);
		\draw[black,fill=black] (210:1) circle (.35);
		\draw[black,fill=black] (330:1) circle (.35);
		\draw[opacity=0] (0:1.2) circle (0.1);
}}

\newsavebox\pdegbox
\savebox\pdegbox{\tikz{
		\draw[black,line width=0.10cm] (90:1) circle (.30);
		\draw[black,fill=black] (210:1) circle (.35);
		\draw[black,fill=black] (330:1) circle (.35);
		\draw[black,line width=0.28cm ] (210:1) -- (330:1);
		\draw[opacity=0] (0:1.2) circle (0.1);
}}

\newsavebox\vvvbox
\savebox\vvvbox{\tikz{
		\draw[black,fill=black] (90:1) circle (.35);
		\draw[black,fill=black] (210:1) circle (.35);
		\draw[black,fill=black] (330:1) circle (.35);
		\draw[opacity=0] (0:1.2) circle (0.1);
}}

\newsavebox\evbox
\savebox\evbox{\tikz{
		\draw[black,fill=black] (90:1) circle (.35);
		\draw[black,fill=black] (210:1) circle (.35);
		\draw[black,fill=black] (330:1) circle (.35);
		\draw[black,line width=0.28cm ] (210:1) -- (330:1);
		\draw[opacity=0] (0:1.2) circle (0.1);
}}

\newsavebox\eebox
\savebox\eebox{\tikz{
		\draw[black,fill=black] (90:1) circle (.35);
		\draw[black,fill=black] (210:1) circle (.35);
		\draw[black,fill=black] (330:1) circle (.35);
		\draw[black,line width=0.28cm ] (90:1) -- (330:1);
		\draw[black,line width=0.28cm ] (90:1) -- (210:1);
		\draw[opacity=0] (0:1.2) circle (0.1);
}}

\newsavebox\eeebox
\savebox\eeebox{\tikz{
		\draw[black,fill=black] (90:1) circle (.35);
		\draw[black,fill=black] (210:1) circle (.35);
		\draw[black,fill=black] (330:1) circle (.35);
		\draw[black,line width=0.28cm ] (90:1) -- (330:1);
		\draw[black,line width=0.28cm ] (90:1) -- (210:1);
		\draw[black,line width=0.28cm ] (210:1) -- (330:1);
		\draw[opacity=0] (0:1.2) circle (0.1);
}}

\makeatletter
\newcommand{\overrighharpoonup}[1]{\ThisStyle{%
		\vbox {\m@th\ialign{##\crcr
				\rightharpoonupfill \crcr
				\noalign{\kern-\p@\nointerlineskip}
				$\hfil\SavedStyle#1\hfil$\crcr}}}}

\def\rightharpoonupfill{%
	$\SavedStyle\m@th\mkern+0.8mu\cleaders\hbox{$\shortbar\mkern-4mu$}\hfill\rightharpoonuptip\mkern+0.8mu$}

\def\rightharpoonuptip{%
	\raisebox{\z@}[2pt][1pt]{\scalebox{0.55}{$\SavedStyle\rightharpoonup$}}}

\def\shortbar{%
	\smash{\scalebox{0.55}{$\SavedStyle\relbar$}}}
\makeatother

\makeatletter
\newcommand{\overlefharpoonup}[1]{\ThisStyle{%
		\vbox {\m@th\ialign{##\crcr
				\leftharpoonupfill \crcr
				\noalign{\kern-\p@\nointerlineskip}
				$\hfil\SavedStyle#1\hfil$\crcr}}}}

\def\leftharpoonupfill{%
	$\SavedStyle\m@th\mkern+0.8mu\cleaders\hbox{$\shortbar\mkern-4mu$}\hfill\leftharpoonuptip\mkern+0.8mu$}

\def\leftharpoonuptip{%
	\raisebox{\z@}[2pt][1pt]{\scalebox{0.55}{$\SavedStyle\leftharpoonup$}}}

\makeatletter
\newsavebox\myboxA
\newsavebox\myboxB
\newlength\mylenA

\newcommand*\xoverline[2][0.75]{%
	\sbox{\myboxA}{$\m@th#2$}%
	\setbox\myboxB\null% Phantom box
	\ht\myboxB=\ht\myboxA%
	\dp\myboxB=\dp\myboxA%
	\wd\myboxB=#1\wd\myboxA% Scale phantom
	\sbox\myboxB{$\m@th\overline{\copy\myboxB}$}%  Overlined phantom
	\setlength\mylenA{\the\wd\myboxA}%   calc width diff
	\addtolength\mylenA{-\the\wd\myboxB}%
	\ifdim\wd\myboxB<\wd\myboxA%
	\rlap{\hskip 0.5\mylenA\usebox\myboxB}{\usebox\myboxA}%
	\else
	\hskip -0.5\mylenA\rlap{\usebox\myboxA}{\hskip 0.5\mylenA\usebox\myboxB}%
	\fi}
\makeatother

\begin{document}
	
	\title[Bootstrap percolation of extension hypergraphs]{Bootstrap percolation of extension hypergraphs}
	
	\author[Weichan Liu]{Weichan Liu}
	\address{School of Mathematics, Shandong University, Jinan, China}
	\email{wcliu@sdu.edu.cn}
	
	\author[Bjarne Sch\"{u}lke]{Bjarne Sch\"{u}lke}
	\address{Extremal Combinatorics and Probability Group, Institute for Basic Science, Daejeon, South Korea}
	\email{schuelke@ibs.re.kr}
	
	\author[Xin Zhang]{Xin Zhang}
	\address{School of Mathematics and Statistics, Xidian University, Xi'an, China}
	\email{xzhang@xidian.edu.cn}

	\begin{abstract}
        For~$k$-graphs~$F$ and~$H_0$ the~$F$-bootstrap percolation process (or~$F$-process) starting with~$H_0$ is a sequence~$(H_i)_{i\geq0}$ of~$k$-graphs such that~$H_{i+1}$ is obtained from~$H_i$ by adding all those~$e\in V(H_0)^{(k)}\setminus E(H_i)$ as edges that complete a new copy of~$F$.
        The running time of this~$F$-process, denoted by~$M_F(H_0)$, is the smallest~$i$ with~$H_i=H_{i+1}$.
        Bollob\'as proposed the problem of determining the maximum running time for~$n\in\mathds{N}$, i.e., $$M_F(n)=\max_{\vert V(H_0)\vert=n}M_F(H_0)\,.$$
        
        Recently, Noel and Ranganathan initiated the study of this quantity for~$k$-graphs.
        In this work, we determine the asymptotics of~$M_F(n)$ for a large class of~$k$-graphs.
	    Given a graph~$G=(V,E)$, the~$k$-extension of~$G$ is a~$k$-graph~$F^{(k)}(G)$ obtained from~$G$ by enlarging each edge with a~$(k-2)$-set of new vertices.
        We show that for every graph~$G$ on~$t$ vertices and every~$k\geq 3$,~$M_{F^{(k)}(G)}(n)\leq C_{k,t}$ for some constant~$C_{k,t}$ depending only on~$t$ and~$k$.
	\end{abstract}
	
	\maketitle
	\section{Introduction}
	
	A \emph{$k$-uniform hypergraph} (or \emph{$k$-graph}) $H$ consists of a vertex set $V(H)$ and a set of edges~$E(H) \subseteq V(H)^{(k)}$, where~$V(H)^{(k)} = \{ S \subseteq V(H) : |S| = k \}$.
	Given $k$-graphs $F$ and~$H$, a copy of~$F$ in~$H$ is a subhypergraph of~$H$ that is isomorphic to~$F$.
    The number of copies of~$F$ in~$H$ is denoted by~$c(F,H)$.
    The \emph{$F$-bootstrap percolation process} (or \emph{$F$-process}) \emph{starting with}~$H$ is the sequence~$(H_i)_{i\geq0}$ of~$k$-graphs with~$H_i$ defined inductively as follows.
    Set~$H_0=H$ and given~$H_i$ let~$H_{i+1}$ be the~$k$-graph with vertex set~$V(H)$ and edge set
    \begin{align*}
		E(H_{i+1})=E(H_i)\cup\big\{e\in V(H)^{(k)}:\: c(F,H_i\cup e)>c(F,H_i)\big\}\,,
	\end{align*}
    where~$H_i\cup e$ denotes the~$k$-graph~$\big(V(H),E(H_i)\cup\{e\}\big)$.

    This notion originates in the work of Bollob\'as~\cite{B:68}, where it arose in the context of weak saturation.
    Since then, connections to other areas such as to cellular automata, more general bootstrap-type dynamics~\cite{M:17}, and statistical physics (see~\cite{BBM:12}~and~\cite{CLR:79}) have been established.
    Bollob\'as~\cite{BPRS:17} further suggested the problem of determining the maximum running time of such processes (see also~\cite{GKP:17} for a random variant).
    More precisely, the running time of the~$F$-process starting with~$H$ is~$\tau_F(H):=\min\{t\in\mathbb{N}_{\geq0}:\:H_t = H_{t + 1}\}$.
    Given~$n\in\mathds{N}$ (and for~$k=2$), Bollob\'as asked to determine
	\[
	M_F(n) = \max \big\{ \tau_F(H) : H \text{ is a } k\text{-graph with } n \text{ vertices} \big\}\,.
	\]
	
	One major theme in this area is the maximum running time of~$K_r$-processes, where~$K_r$ is the complete graph on~$r$ vertices. 
	Bollob\'as, Przykucki, Riordan, and Sahasrabudhe~\cite{BPRS:17} and, independently, Matzke~\cite{M:15} showed that $M_{K_4}(n)=n-3$, for all $n\ge 3$. 
	In~\cite{BPRS:17} the authors also showed that~$M_{K_r}(n)\ge n^{2-\lambda_r-o(1)}$ for~$r\ge 5$, where~$\lambda_r$ is some explicit constant with~$\lambda_r\rightarrow 0$ as~$r\rightarrow \infty$, and conjectured that for all~$r\ge 5$,~$M_{K_r}(n)=o(n^2)$. 
	Balogh, Kronenberg, Pokrovskiy, and Szab\'o~\cite{BKPS:19} disproved this conjecture, showing  that~$M_{K_r}(n)=\Omega(n^2)$ for all~$r\ge 6$. 
	For~$r=5$, they used Behrend's construction for arithmetic progressions to show that~$M_{K_5}(n)$ is asymptotically bigger than~$n^{2-\eps}$ for all~$\eps>0$.
    It remains an open problem whether~$M_{K_5}(n)$ is quadratic in~$n$ or not.
	
    Considering graphs~$F$ that are not complete, Fabian, Morris, and Szabó~\cite{FMS:23} showed that~$M_{C_k}(n)$ is of order~$\log_{k-1}(n)$, where~$C_k$ is the cycle of length~$k$.
    In~\cite{FMS:25}, the same authors showed that for every tree~$T$,~$M_T(n)$ is a constant depending only on~$\vert V(T)\vert$, while the exact function remains unknown.
	They also explore how certain properties of~$F$ such as the minimum degree and the connectivity can imply a relatively small running time. 
    For more background on the~$F$-process we refer the reader to the recent survey by Fabian, Morris, and Szab\'o~\cite{FMS:26}.
    
	Recently, the investigation of~$F$-processes and their maximum running times was initiated for hypergraphs.
    Noel and Ranganathan~\cite{NR:22} showed that~$M_{K^{(k)}_r}(n)=\Theta(n^k)$ for~$k\geq3$ and~$r\geq k+2$, where~$K^{(k)}_r$ is the complete~$k$-graph on~$r$ vertices.
    Answering a question by Noel and Ranganathan, Hartarsky, and Lichev \cite{HL:22}, and, independently, Espuny Díaz, Janzer, Kronenberg, and Lada \cite{EJKL:22} proved that also~$M_{K^{(k)}_{k+1}}(n)=\Theta(n^k)$.
    Note that these results state that for the running time of hypergraph cliques the trivial upper bound already provides the right order of magnitude.
	In contrast, Espuny Díaz, Janzer, Kronenberg, and Lada~\cite{EJKL:22} also determined~$M_{K^{(3)-}_4}(n)$ to be linear in~$n$, which requires a non-trivial upper bound.

    In~\cite{EJKL:22} the authors suggested to study the asymptotic running time of the bootstrap percolation process for other hypergraphs which are not complete.
    Here we consider extension hypergraphs, which were for instance considered by Mubayi~\cite{M:06} in the context of the hypergraph Tur\'an problem.
	The \emph{$k$-extension}~$F^{(k)}(G)$ of a graph~$G$ is the~$k$-graph defined as follows.
    The vertex set is given by $$V(F^{(k)}(G))=V(G)\dcup\{v^i_e:\:i\in[k-2],e\in E(G)\}$$ (where~$v_e^i\neq v_{e'}^j$ if~$e\neq e'$ or~$i\neq j$) and the edge set is $$E(F^{(k)}(G))=\big\{e\cup\{v_e^1,\dots,v_e^{k-2}\}:\:e\in E(G)\big\}\,.$$
    %Note that~$F^{(k)}(G)$ has~$\vert V(G)\vert+\vert E(G)\vert\cdot(k-2)$ vertices and~$\vert E(G)\vert$ edges.
    Our main result is that the maximum running time of the~$F^{(k)}(G)$-process is constant in~$n$.
	
	\begin{theorem}\label{thm:main result}
		Given an integer~$k\geq3$ and a graph~$G$, there is some~$C\in\mathds{N}$ such that for all~$n\in\mathds{N}$ we have~$M_{F^{(k)}(G)}(n)\leq C$.
	\end{theorem}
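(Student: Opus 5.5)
We write $F=F^{(k)}(G)$, $t=\vert V(G)\vert$ and $m=\vert E(G)\vert$. We may assume $G$ has no isolated vertices, since isolated vertices of $G$ only require $n$ to be large and do not affect which edges get added. The plan is to show that once the process has run for a bounded number of steps, the edge set stabilises, because the set of new edges that can be added is governed by a bounded amount of structure.

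The key observation is that in $F$ every vertex $v_e^i$ has degree one. Hence a copy of $F$ in $H_i\cup e$ that uses the new edge $e$ is essentially a copy of $G$ in the \emph{shadow}: we need $t$ vertices $x_1,\dots,x_t$ and, for each edge $ab\in E(G)$, an edge of $H_i\cup e$ containing $\{x_a,x_b\}$. These edges must be pairwise distinct and pairwise intersect only inside $\{x_1,\dots,x_t\}$, as prescribed by $G$. The degree-one vertices give a lot of freedom.

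The first step is to classify pairs $\{x,y\}$ by their \emph{codegree} in $H_i$, truncated at a large constant $L=L(k,t)$. If a pair has codegree at least $L$, then its edges can always be chosen disjoint enough from any bounded set of other vertices. So "rich" pairs behave like genuine graph edges of $G$ that can be used freely.

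The second step is to show monotonicity and saturation. Once some edge $e$ is added, every $k$-set containing a suitable pair becomes addable. Concretely, if $e$ completes a copy of $F$, with $e$ playing the role of the edge $ab$ and $e\supseteq\{x_a,x_b\}$, then every $k$-set $e'\supseteq\{x_a,x_b\}$ whose other $k-2$ vertices avoid the rest of the copy is also added in the same step. The reason is that the same copy works with $e'$ in place of $e$. Hence after one step, every pair that was "activated" has codegree of order $n$, so it is rich. The state of the process can therefore be summarised by the graph $R_i$ of rich pairs, together with the graph $P_i$ of pairs of positive codegree (the shadow). Note that $P_i$ is monotone and $R_i\subseteq P_i$.

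The third step is to show that the process essentially becomes a graph process on these shadows. A $k$-set $e$ is added at step $i+1$ roughly when some pair $xy\subseteq e$ and $t-2$ other vertices form a copy of $G$ in which the edge $xy$ is new and all other edges lie in $P_i$, with enough disjointness. I would show that within $O(1)$ steps every such added pair becomes rich and stays rich, and that new pairs only appear in the shadow through such $G$-completions. Thus $P_{i+1}\supseteq P_i\cup\{\text{pairs closing a copy of }G\text{ in }P_i\}$. This looks like the $G$-process on graphs, whose running time can be as large as order $\log n$ (for cycles), so a bound of this kind alone is not enough.

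The main obstacle is exactly this point, and it is where the hypergraph freedom must be exploited. A new $k$-edge $e$ contains $\binom{k}{2}\ge 3$ pairs and may use any of them. Moreover, a rich pair $xy$ yields edges $\{x,y\}\cup S$ for almost all $(k-2)$-sets $S$, so rich pairs create shadow pairs $xz$ for almost every $z$. I would aim to prove the following: once some pair is rich and lies in a component $C$ of $P_i$, then within a bounded number of further steps every pair inside the relevant vertex set (the union of vertices in rich pairs and their neighbourhoods) becomes rich. This should follow because $G$-copies can be routed through the near-complete shadow created by rich edges. Before any rich pair exists, the codegrees are bounded and no edge has ever been added; the first addition step creates a rich pair immediately. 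So after $O_{k,t}(1)$ steps the shadow on the active part is complete and rich, and the edges that remain addable are precisely those meeting this part appropriately, all added at once. Carefully verifying that this "blow-up" happens in boundedly many steps, with a bound depending only on $k$ and $t$, for an arbitrary $G$ (in particular sparse $G$ such as paths or matchings) is the hard part, and I expect it to require a case analysis on whether $G$ contains a vertex of degree at least two.
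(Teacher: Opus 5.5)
Your proposal has a genuine gap: the whole bounded-time argument is deferred to a claim you do not prove (``I would aim to prove'', ``should follow'', ``the hard part''). As stated, that claim is false. Take $k=3$, $G=K_4$, and let $H_0$ be a single copy of $F^{(3)}(K_4)$ with center $[4]$ plus isolated vertices. Then $H_2$ consists exactly of the triples $abx$ with $a,b\in[4]$ and $x\notin[4]$, and $H_3=H_2$. To see this, note that every edge has exactly two vertices in $[4]$, which forces every copy of $F^{(3)}(K_4^{-})$ to have center $[4]$. Indeed, a center vertex $s\notin[4]$ would have center edges $sa_1b_1, sa_2b_2$ with $\{a_1,a_2,b_1,b_2\}=[4]$. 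The fourth center vertex would then lie outside $[4]$ and need extension vertices in $\{b_1,b_2\}$, which are already used. So the six pairs in $[4]$ are rich and their shadow neighbourhoods cover $V$, yet no pair outside $[4]$ ever enters the shadow: the process dies instead of blowing up. A correct key statement must be a dichotomy: either the process halts soon, or it becomes complete. Your first steps (activated pairs get codegree of order $n$, the reduction to a shadow process) are fine. But, as you note yourself, they are compatible with a $\log n$ running time, so they carry none of the weight.

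What is missing is a mechanism that rules out a long chain of sparse, far-apart activations. In the paper this works as follows. Call a vertex good if it lies in at least $c$ center pairs; at least $s$ good vertices force $H$ to become complete within $6$ steps (Lemma~\ref{lem:good+6}). Otherwise, every real step (at least one in any three consecutive steps) yields an essential pair: a pair of a genuinely new center minimising the matching codegree. Such a pair is essential at most twice, and each vertex lies in at most $c^3$ of them. The reason is that once $v$ is good, a pair $vx$ of large matching codegree can never again be the minimiser in a new center. A long run therefore gives many essential pairs $a_ib_i$ in the same position, each avoiding the other copies. An exchange lemma shows that $C(F_i)[a_i\to a_j]$ is a center unless one of two specific neighbourhood intersections is non-empty. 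A tournament argument on these obstructions would produce a center vertex with $s>t$ neighbours, a contradiction. Hence many exchanges succeed, many $b_i$ become good, and the process ends. Your outline contains no substitute for this counting.
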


    We remark that the case~$k=3$ is the most interesting one and indeed the proof can be significantly shortened for~$k\geq4$ as we point out in the proof sketch below.
    Note that the constant~$C$ depends on the uniformity and on~$G$.
    It is not difficult to see that for certain classes of graphs, such as cycles and paths,~$M_{F^{(k)}(G)}(n)$ is asymptotically bounded by an absolute constant not depending on~$\vert V(G)\vert$ and~$k$.
    However, we observe that in general the maximum running time of~$F^{(k)}(G)$ cannot be bounded by an absolute constant independent of~$\vert V(G)\vert$.
    Hence, Theorem~\ref{thm:main result} is in a sense best possible.
    
	\begin{proposition}\label{prop:not absolute constant lower bound}
		Let $T=K_{1,t-1}$ be the star with~$t$ vertices.
        Then $M_{F^{(k)}(T)}(n)\ge t-1$ for all sufficiently large $n$.
	\end{proposition}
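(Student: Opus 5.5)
We plan an explicit construction of a starting $k$-graph $H_0$ in which a chain of special vertices is activated one after another, one per step, so that some edge only appears in round $t-1$. The first step is to reformulate the process for $F=F^{(k)}(K_{1,t-1})$. This $F$ is a sunflower of $t-1$ edges whose kernel is a single vertex $c$ and whose petals are pairwise disjoint $(k-1)$-sets. Hence a $k$-set $e\notin E(H_i)$ is added in round $i+1$ if and only if some $v\in e$ lies in $t-2$ edges of $H_i$ that are pairwise disjoint outside $v$ and disjoint from $e\setminus\{v\}$. Call the maximum number of edges at $v$ that are pairwise disjoint outside $v$ the \emph{star degree} of $v$. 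We may assume $t\ge 3$, since for $t\le 2$ the bound $t-1\le 1$ is trivial (take $H_0$ a single edge if needed).

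\textbf{Construction.} Take special vertices $v_1,\dots,v_{t-1}$. For each $i\le t-2$, give $v_i$ exactly $t-1-i$ initial edges of the form $\{v_i\}\cup P$, where all the $(k-1)$-sets $P$ (the petals) are pairwise disjoint and disjoint from the special vertices. In particular $v_{t-1}$ gets no initial edge. All remaining vertices are \emph{ordinary} and isolated; $n$ large gives plenty of them. We fix a target edge $e^*=\{v_{t-1}\}\cup S$, where $S$ consists of $k-1$ ordinary vertices. We will show $e^*\in E(H_{t-1})\setminus E(H_{t-2})$, which gives $\tau_F(H_0)\ge t-1$.

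\textbf{Key invariant.} For $j\le t-3$, every edge of $H_j$ is either initial or contains some $v_l$ with $l\le j$. We prove this by induction on $j$, bounding star degrees in $H_{j-1}$. At a special vertex $v_i$ with $i>j$, a star can use the $t-1-i$ petals plus at most one edge per $v_l$ with $l\le j-1$, because two edges through $v_l$ meet outside $v_i$. This gives star degree at most $t-1-i+j-1<t-2$. At an ordinary or petal vertex the star degree is at most $1+(j-1)\le t-3$, since the only initial edge at a petal vertex is its own edge. Hence only the vertices $v_l$ with $l\le j$ can fire by round $j$. That $v_l$ does fire in round $l$ follows from the same count run in the other direction. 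After round $l-1$, vertex $v_l$ has its $t-1-l$ petal edges together with one edge through each $v_{l'}$, $l'<l$. These can be chosen pairwise disjoint outside $v_l$ because $n$ is large, and this totals $t-2$. In particular, in round $t-2$ the vertex $v_{t-2}$ fires, and after it $v_{t-1}$ has star degree $t-2$ using only edges avoiding $S$. Therefore $e^*\in E(H_{t-1})$.

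\textbf{Main obstacle: showing $e^*\notin E(H_{t-2})$.} This is where care is needed. Naively, a petal vertex of $v_{t-2}$ can reach star degree $t-2$ after round $t-3$ and fire in round $t-2$. This is why the target edge is chosen to avoid all petal vertices. For $e^*$ to be added in a round $j+1\le t-2$, some vertex of $e^*$, meaning $v_{t-1}$ or an ordinary vertex of $S$, would need star degree $t-2$ in $H_j$ with $j\le t-3$. The invariant shows that every edge at such a vertex contains some $v_l$ with $l\le j$, and none of these vertices has initial edges. So its star degree is at most $j\le t-3$, a contradiction. The delicate part of writing the proof is bookkeeping: keeping the invariant's range of $j$ aligned with exactly which vertices may fire in which round.
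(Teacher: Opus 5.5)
Your proposal is correct and is essentially the paper's proof. Your construction is the paper's disjoint union of copies of $F^{(k)}(K_{1,h})$ for $h\in[t-2]$ plus isolated vertices, relabelled so that your $v_i$ is the paper's $v_{t-1-i}$ and your $e^*$ lies in $W^{(k)}$. Your key invariant is the paper's property (1), that every new edge of $H_i$ contains an already-fired center, and the firing step corresponds to its properties (2) and (3). The one point you leave implicit is that when $v_{l'}$ fires it gains every edge $\{v_{l'},v_l\}\cup R$ with $R$ a $(k-2)$-set of ordinary vertices; this is what justifies the disjoint choices in your firing step, and the paper records it explicitly as part of its induction.
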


    In this paper, we do not attempt to optimise the constants and we omit any divisibility issues unless they are relevant.
    We collect some definitions in Section~\ref{sec:notation}, then we prove Theorem~\ref{thm:main result} in Section~\ref{sec:main proof} and Proposition~\ref{prop:not absolute constant lower bound} in Section~\ref{sec:prop}.

    \subsection*{Overview of the proof}
    We commence with the simple observation that every pair in the ``center'' of a copy of~$F^{(k)}(G)$ in some~$H_i$ will have almost full degree after at most two more steps (Observation~\ref{obs:almostfulldegree}).
    This is the basic motivation for the definition of \emph{essential pairs}, which are particular pairs appearing in the center of a copy of~$F^{(k)}(G)$.
    After many steps, there must also be many essential pairs (Observation~\ref{obs:1/3realstep} and Observation~\ref{obs:atmosttwiceessential}).
    A key step in the proof is that if there are many \emph{good} vertices, that is, vertices contained in many center pairs, then the process will stop soon afterwards (Lemma~\ref{lem:good+6}).
    This is combined with another key lemma that no vertex can be contained in many essential pairs (Lemma~\ref{lem:essential+bounded}).
    At this point, we remark that for~$k\geq4$ the proof is significantly simpler since it can be observed that the process will stop shortly after there is a large enough matching of essential pairs.\footnote{Since the write-up is not much longer, we deal with a general uniformity~$k\geq3$ throughout the proof. But the reader may simply think of~$k=3$.}
    Next, assuming that the process does not terminate sooner, we choose a large matching of essential pairs whose ``associated copies'' of~$F^{(k)}(G)$ do not interact too much (Lemma~\ref{lem:matchingessential}).
    Now we analyse the interactions between any two such copies.
    Either certain vertices can be exchanged between the centers of these copies, or there has to be a ``reason'' why such an exchange is not possible.
    This reason takes the form of certain sets being non-empty (Lemma~\ref{lem:cannotreplace}).
    Eventually, by analysing an auxiliary directed graph representing those ``reasons'', we obtain a contradiction if there are too many copies between which certain vertices cannot be exchanged.
    This entails that between many copies the exchange must indeed be possible, which in turn will yield many good vertices (Observation~\ref{obs:manyalternatives+good}), completing the proof.

	\section{Notation}\label{sec:notation}
    Given integers~$i$ and~$j$, we write~$[i,j]=\{i,i+1,\dots,j-1,j\}$, where~$[i,j]=\{i\}$ if~$i=j$ and~$[i,j]=\emptyset$ if~$i>j$, and we set~$[i]=[1,i]$.
    For graphs and hypergraphs we often omit parentheses around sets of vertices, for instance, edges.
    Similarly, when considering a~$k$-graph~$H$, vertices~$v,w\in V(H)$ and an~$i$-set~$\mathbf{x}\in V(H)^{(i)}$, we write~$v\mathbf{x}=\mathbf{x}v=\{v\}\cup\mathbf{x}$ and~$vw\mathbf{x}=v\mathbf{x}w=\{v,w\}\cup\mathbf{x}$ etc.
    
    When~$G$ is a graph and we consider copies of~$F^{(k)}(G)$ in a~$k$-graph~$H$, we arbitrarily fix an isomorphism~$\varphi_F$ from each copy~$F$ to~$F^{(k)}(G)$.
    For a copy~$F$ of~$F^{(k)}(G)$ the \emph{center} of~$F$ is the graph whose vertex set is the preimage of~$V(G)$ under~$\varphi_F$, that is,~$\varphi_F^{-1}(V(G))$, and whose edge set is~$\big\{\varphi_F^{-1}(e):\:e\in E(G)\big\}$.
    We denote the center of~$F$ by~$C(F)$.
    The image of~$v\in V(F)$ under the fixed isomorphism is also referred to as \emph{the position of~$v$} (in~$F$).
    Given two copies~$F$ and~$F'$ of~$F^{(k)}(G)$, we say that~$v\in V(F)$ and~$v'\in V(F)$ \emph{play the same r\^ole} (in~$F$ and~$F'$, respectively) if their positions are the same.

    When dealing with centers of copies of~$F^{(k)}(G)$ in~$H$, we sometimes consider graphs that arise from a center by replacing one of its vertices by another vertex of~$H$.
    For this we introduce the following notation.
    Given an (ambient) vertex set~$V$, a graph~$C$ with~$V(C)\subseteq V$, as well as vertices~$x\in V(C)$ and~$y\in V\setminus V(C)$, the graph~$C[x\to y]$ has the vertex set~$V(C)\setminus\{x\}\cup\{y\}$ and edge set~$\{uv\in E(C):\: u,v\neq x\} \cup \{yw:\:xw\in E(C)\}$.
	We also say~$C[x\to y]$ is the graph obtained from~$C$ by \emph{replacing}~$x$ by~$y$.

    Let~$D$ be a directed graph and~$v \in V(D)$ a vertex.
    The~\emph{out-neighbourhood} and~\emph{in-neighbourhood} of~$v$ are
	\[
	N^+_D(v) := \{ u \in V(D) :\: (v,u) \in E(D) \} \text{ and }
	N^-_D(v) := \{ u \in V(D) :\: (u,v) \in E(D) \}\,,
	\]
	respectively.
	Accordingly, the~\emph{out-degree} and the \emph{in-degree} of~$v$ are defined as
	\[
	\deg^+_D(v) := |N^+_D(v)|\text{ and }
	\deg^-_D(v) := |N^-_D(v)|\,,
	\]
	respectively.
	
	A~\emph{tournament} on~$n$ vertices is a directed graph~$T$ in which for every pair of distinct vertices~$u,v \in V(T)$ exactly one of the two edges~$(u,v)$ or $(v,u)$ is present.
	That is, a tournament is an orientation of a complete (undirected) graph where each edge is assigned a direction.
	
	\section{Proof of Theorem~\ref{thm:main result}}\label{sec:main proof}
	Let~$k\geq3$ be an integer, let~$G$ be a~$2$-graph on~$t$ vertices, and let~$H$ be an arbitrary~$k$-graph on~$n\geq n_0$ vertices, where~$n_0$ is sufficiently large.
	Further, let~$(H_i)_{i \geq 0}$ be the~$F^{(k)}(G)$-process starting with~$H$.
	Set~$V=V(H)$ and for later use we choose integers~$\ell$,~$c$,~$s$,~$\Gamma_2$,~$\Gamma_1$, and $\Gamma$ such that
	\begin{align}\label{eq:hierarchy}
		t,k \ll \ell \ll c \ll s \ll \Gamma_2 \ll \Gamma_1 \ll \Gamma \ll n_0\,.
	\end{align}
	
	Moreover, let~$\iota=\tau_F(H)$ be the running time of~$(H_i)_{i\geq 0}$.
    We show that assuming~$\iota>2\Gamma$ eventually leads to a contradiction.
    Note that it is enough to establish this bound for~$n\geq n_0$ since we can take the maximum of~$2\Gamma$ and~$\max_{n'< n_0}M_{F^{(k)}(G)}(n')$ to obtain a constant upper bound on~$M_{F^{(k)}(G)}(n)$ for all~$n$.
    
	The following simple observation will be useful throughout the proof.
	
	\begin{observation}\label{obs:almostfulldegree}
		Let~$F$ be a copy of~$F^{(k)}(G)$ in~$H_m$, for some~$m \geq 0$, and let~$uv \in E(C(F))$.
        Further, let~$\mathbf{w} \subseteq V \setminus V(F)$ and~$\mathbf{w}' \subseteq V \setminus V(C(F))$ be~$(k-2)$-sets.
        Then we have~$u\mathbf{w}v\in E(H_{m+1})$ and~$u\mathbf{w}'v\in E(H_{m+2})$.
		%Consequently, for every~$(k-3)$-set~$\mathbf{x} \subseteq V \setminus V(C(F))$, we have~$d_{H_{m+2}}(u\mathbf{x}v)\geq n-t-k+3$.
	\end{observation}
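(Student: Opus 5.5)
The plan is to exhibit, for each of the two claims, an explicit copy of $F^{(k)}(G)$ that contains the target edge and is new at the relevant step. Throughout, let $\varphi=\varphi_F$, $x=\varphi(u)$, $y=\varphi(v)$, so $xy\in E(G)$. The key freedom is that the edge of $F$ corresponding to $xy$ can be rebuilt with a different set of $k-2$ extension vertices.

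\emph{First claim.} Let $\mathbf{w}\subseteq V\setminus V(F)$ be a $(k-2)$-set and let $F'$ be the $k$-graph obtained from $F$ by replacing the edge $u\mathbf{x}v$, where $\mathbf{x}=\varphi^{-1}(\{v^1_{xy},\dots,v^{k-2}_{xy}\})$, by $u\mathbf{w}v$. Formally, $V(F')=V(F)\setminus\mathbf{x}\cup\mathbf{w}$. Since $\mathbf{w}$ avoids $V(F)$, the map $\varphi$ modified to send $\mathbf{w}$ bijectively onto $\{v^i_{xy}\}$ is an isomorphism $F'\to F^{(k)}(G)$. All edges of $F'$ other than $u\mathbf{w}v$ lie in $H_m$. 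If $u\mathbf{w}v\in E(H_m)$, there is nothing to show. Otherwise $F'$ is a copy of $F^{(k)}(G)$ in $H_m\cup u\mathbf{w}v$ that is not in $H_m$, so $c(F,H_m\cup u\mathbf{w}v)>c(F,H_m)$ and $u\mathbf{w}v\in E(H_{m+1})$.

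\emph{Second claim.} Now $\mathbf{w}'$ only avoids $V(C(F))$, so it may meet extension vertices of $F$, including those of other edges. The plan is to route through an intermediate copy. Choose a $(k-2)$-set $\mathbf{z}\subseteq V\setminus(V(F)\cup\mathbf{w}')$, which is possible since $n$ is large. By the first claim, $u\mathbf{z}v\in E(H_{m+1})$.

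Next, build $F''$ from $F$ by relocating to fresh vertices every extension vertex of $F$ that lies in $\mathbf{w}'$. For each edge $e'\neq xy$ of $G$ with $\varphi^{-1}(\{v^i_{e'}\})\cap\mathbf{w}'\neq\emptyset$, replace the whole extension set of $e'$ by a fresh $(k-2)$-set. These fresh sets are disjoint from each other, from $\mathbf{z}$, from $\mathbf{w}'$ and from $V(F)$, and the corresponding edges lie in $H_{m+1}$ by the first claim. Also replace the extension set of $xy$ by $\mathbf{z}$. Each such replacement is a single-edge swap applied to $F$ in $H_m$, with new vertices outside $V(F)$, so each resulting edge is in $H_{m+1}$. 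Hence $F''$ is a copy of $F^{(k)}(G)$ in $H_{m+1}$ with the same center $C(F)$, and its extension vertices avoid $\mathbf{w}'$ except possibly those of $xy$, which are now $\mathbf{z}$ and are disjoint from $\mathbf{w}'$.

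So $\mathbf{w}'\subseteq V\setminus V(F'')$, and the first claim applied to $F''$ in $H_{m+1}$ gives $u\mathbf{w}'v\in E(H_{m+2})$.

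The only delicate point is this second step: checking that the fresh sets can be chosen pairwise disjoint and outside all forbidden vertices. This needs at most $(k-2)(|E(G)|+1)+|V(F)|+k$ vertices, which is fine for $n\ge n_0$.
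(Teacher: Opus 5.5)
Your proof is correct and follows the paper's argument. The first claim comes from swapping the extension set of $uv$ for $\mathbf{w}$; the second comes from using the first claim to build a copy in $H_{m+1}$ with center $C(F)$ whose vertex set avoids $\mathbf{w}'$, and then applying the first claim once more. You also spell out what the paper compresses into ``since $n$ is sufficiently large'': which extension sets get relocated, and why the fresh sets can be chosen pairwise disjoint and away from $V(F)\cup\mathbf{w}'$.
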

	
	\begin{proof}
		
		First, observe that for any edge~$xy \in E(C(F))$ and any~$(k-2)$-set~$\mathbf{w} \subseteq V \setminus V(F)$, it follows that~$x\mathbf{w}y\in E(H_{m+1})$ by the definition of the~$F^{(k)}(G)$-process.
		Hence, since~$n$ is sufficiently large, in~$H_{m+1}$ there is a copy of~$F^{(k)}(G)$ with (up to isomorphism) the same center as~$F$ whose vertex set is disjoint from~$\mathbf{w'}$.
		Hence,~$u\mathbf{w'}v \in E(H_{m+2})$ by the definition of the~$F^{(k)}(G)$-process.
		%The third part follows immediately from the first two.
	\end{proof}

    Let~$\cC(H_m)$ denote the family of centers of copies of~$F^{(k)}(G)$ in~$H_m$ and let~$E(\cC(H_m))\subseteq V^{(2)}$ be the collection of all pairs of vertices that form an edge in some~$C\in\cC(H_m)$.
	Given~$m\in\mathds{N}_0$, a vertex~$v \in V$ is called \emph{$m$-good} 
	if there are at least~$c$ distinct vertices~$w\in V$ with~$vw\in E(\cC(H_m))$.
	The set of~$m$-good vertices is denoted by~$V_m^{\text{good}}$.
	We say that~$m\in\mathds{N}$ is a \emph{real step} if~$E(\cC(H_m))\setminus E(\cC(H_{m-1}))\neq\emptyset$.
	Otherwise, we say that~$m$ is a \emph{fake step}.
	
	Next, we define the notion of~$m$-essential pairs for~$m\in\mathds{N}$ being a real step.
	For each real step~$m$, arbitrarily choose some copy~$F_m$ of~$F^{(k)}(G)$ in~$H_m$ such that~$E(C(F_m))\setminus E(\cC(H_{m-1}))$ is non-empty.
	For any pair of vertices~$xy\in V^{(2)}$ let~$d^{\text{match}}_{H_m}(xy)$ denote the maximum number of pairwise disjoint sets~$\mathbf{u}\subseteq V^{(k-2)}$ with~$x\mathbf{u}y\in E(H_m)$.
	In other words,~$d^{\text{match}}_{H_m}(xy)$ is the maximum size of a matching in the family~$\{\mathbf{u}\subseteq V^{(k-2)}:\:x\mathbf{u}y\in E(H_m)\}$.
	Now amongst all~$e\in E(C(F_m))$ select an element~$e$ which minimises~$d^{\text{match}}_{H_{m-1}}(e)$ and denote it by~$e_m$.
	We call~$e_m$ the~$m$-essential pair and~$F_m$ the~$m$-essential copy associated with~$e_m$.
	A pair is said to be \emph{essential} if it is~$m$-essential for
	some integer~$m \ge 1$.
	Note that~$F_m$ is contained in~$H_m$ but not in~$H_{m-1}$.
	
	The following observation guarantees many real steps.
	
	\begin{observation}\label{obs:1/3realstep}
		If~$m$ is an integer with~$3\leq m\leq\iota$, then one of~$m-2$,~$m-1$, and~$m$ must be a real step.
	\end{observation}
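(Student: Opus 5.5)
The argument goes by contradiction. Suppose $3\leq m\leq\iota$ and none of $m-2$, $m-1$, $m$ is a real step, so that
\[
E(\cC(H_{m-2}))=E(\cC(H_{m-1}))=E(\cC(H_m)),
\]
since each fake step leaves the set of center edges unchanged. The aim is to show $H_{m}=H_{m-1}$. That would mean the process has stabilised by time $m-1$, so $\iota\leq m-1$, contradicting $m\leq\iota$.

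First I would establish a closure property via Observation~\ref{obs:almostfulldegree}. Take any $C\in\cC(H_{m-2})$, realised by a copy $F$ in $H_{m-2}$, and any edge $uv\in E(C)$. The observation gives $u\mathbf{w}v\in E(H_{m})$ for every $(k-2)$-set $\mathbf{w}\subseteq V\setminus V(C)$. Applied to a copy realising the center in $H_{m-3}$, the same observation would even give this already in $H_{m-1}$; I come back to this indexing below.

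Next I would determine which edges can be added at the last step. Let $e\in E(H_m)\setminus E(H_{m-1})$. Then $e$ completes a new copy $F'$ of $F^{(k)}(G)$ in $H_{m-1}\cup e$, so $e=x\mathbf{u}y$ with $xy\in E(C(F'))$ and $\mathbf{u}\subseteq V\setminus V(C(F'))$. Since $F'\subseteq H_m$, we get $xy\in E(\cC(H_m))=E(\cC(H_{m-2}))$. So $xy$ lies in a center $C''$ of some copy in $H_{m-2}$. However, $\mathbf{u}$ avoids $V(C(F'))$, not necessarily $V(C'')$, so the closure property does not apply to $e$ directly. This mismatch is the main obstacle.

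To get around it I would track the whole center rather than a single pair. The set $\cC$ of centers, and not only the set of center edges, should be controlled. Then $C(F')$ itself is a center in $H_m$; I would argue it was already a center in $H_{m-2}$, either through the edge condition or by strengthening what a fake step is taken to mean. Given that, the second part of Observation~\ref{obs:almostfulldegree}, applied with the center $C(F')$ at time $m-3$ or $m-2$, shows $x\mathbf{u}y\in E(H_{m-1})$. This contradicts $e\notin E(H_{m-1})$.

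The window of three consecutive steps is exactly what makes this work: it gives the two steps of slack the observation needs. Concretely, a center that already exists at time $m-3$ yields every admissible edge by time $m-1$, so nothing new can appear at step $m$. The delicate part is making sure the center of $F'$ has genuinely been present for long enough, and if the edge-based definition is too weak for that, I would handle it by applying the observation to $e$'s pair together with the fact that $C(F')$'s edges all lie in $E(\cC(H_{m-2}))$.
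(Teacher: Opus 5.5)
There is a genuine gap, and it sits exactly at the step you flag as delicate. You identify the obstacle correctly: $\mathbf{u}$ avoids $V(C(F'))$ but not the center $C''$ witnessing $xy$. ``Track the whole center'' is also the right idea. But the claim that $C(F')$ was already a center at time $m-2$ is never proved, and the route you sketch does not give it.

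From the equality you record, $E(\cC(H_m))=E(\cC(H_{m-2}))$, the argument only goes one step. Observation~\ref{obs:almostfulldegree} applied to witnesses in $H_{m-2}$ builds a copy with center $C(F')$ in $H_{m-1}$. Applying the observation to that copy then gives $e\in E(H_m)$, which you already knew. Your fallback in the last sentence has the same problem.

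The other options you list also fail:
\begin{itemize}
\item $C(F')\in\cC(H_{m-3})$ does not follow from the hypotheses, since only its edges lie, separately, in $E(\cC(H_{m-3}))$.
\item The second part of Observation~\ref{obs:almostfulldegree} applied at time $m-2$ yields edges in $H_m$, not $H_{m-1}$.
\item Even $C(F')\in\cC(H_{m-2})$ is not enough by itself. The first part of the observation needs a realising copy that is disjoint from $\mathbf{u}$.
\item ``Strengthening what a fake step is taken to mean'' changes the statement. The edge-based definition is what is used later, for essential pairs.
\end{itemize}

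The missing argument, which is the paper's, also uses the fake step $m-2$, i.e.\ $E(\cC(H_m))=E(\cC(H_{m-3}))$.
\begin{itemize}
\item For each $p\in E(C(F'))$ pick a copy $F_p\subseteq H_{m-3}$ with $p\in E(C(F_p))$. By the first part of Observation~\ref{obs:almostfulldegree}, $p\mathbf{w}\in E(H_{m-2})$ for every $(k-2)$-set $\mathbf{w}\subseteq V\setminus V(F_p)$.
\item All these copies have bounded size and $n$ is large. So you can choose pairwise disjoint such sets $\mathbf{w}_p$ that also avoid $V(F')$, in particular $\mathbf{u}$.
\item This gives a copy $\widetilde F\subseteq H_{m-2}$ with $C(\widetilde F)=C(F')$ and $\mathbf{u}\cap V(\widetilde F)=\emptyset$.
\item Applying the \emph{first} part of Observation~\ref{obs:almostfulldegree} to $\widetilde F$ at time $m-2$ gives $x\mathbf{u}y\in E(H_{m-1})$, the desired contradiction.
\end{itemize}
The paper in fact concludes that the whole new copy already lies in $H_{m-1}$. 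So the two steps of slack are spent differently from your description. One step converts edge-wise center information at time $m-3$ into a whole center at time $m-2$, realised by a copy chosen to avoid $\mathbf{u}$. The other step produces $e$.
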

	
	\begin{proof}
		For the sake of contradiction, assume the opposite, that is,
		\begin{align}\label{eq:nonewcenteredge}
			E(\cC(H_m))=E(\cC(H_{m-1}))=E(\cC(H_{m-2}))=E(\cC(H_{m-3}))\,.
		\end{align}
		Since~$m\leq\iota$, we further know that there is a copy~$F$ of~$F^{(k)}(G)$ which is in~$H_m$ but not in~$H_{m-1}$.
		Let~$C$ be the center of~$F$.
		Due to~\eqref{eq:nonewcenteredge}, for each~$e\in E(C)$, there is some copy~$F'$ of~$F^{(k)}(G)$ in~$H_{m-3}$ such that~$e\in C(F')$.
        Hence, by Observation~\ref{obs:almostfulldegree}, we have~$e\mathbf{u}\in E(H_{m-2})$ for every~$(k-2)$-set~$\mathbf{u}\subseteq V\setminus V(F')$.
        Considering~\eqref{eq:hierarchy}, this means that there is a copy~$\widetilde{F}$ of~$F^{(k)}(G)$ in~$H_{m-2}$ with~$C(\widetilde{F})=C$ such that~$V(\widetilde{F})$ is disjoint from~$V(F)\setminus V(C)$.
		Therefore, Observation~\ref{obs:almostfulldegree} guarantees that~$F$ is already present in~$H_{m-1}$, a contradiction.
	\end{proof}
	
	Next, we observe that the same pair can be chosen at most twice as essential pair.
    \begin{observation}\label{obs:atmosttwiceessential}
		For every~$e\in V^{(2)}$ there are at most two distinct indices~$m_1,m_2\in[\iota]$ with~$e=e_{m_1}=e_{m_2}$.    
	\end{observation}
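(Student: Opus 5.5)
We argue by contradiction: suppose $e=xy$ equals $e_{m_1}=e_{m_2}=e_{m_3}$ for three indices $m_1<m_2<m_3$ in $[\iota]$. The plan is to show that once $e$ has been chosen as an essential pair at step $m_1$, its matching degree becomes large within one step, and then very large after one more. The minimality in the choice of $e_{m_3}$ then forces every edge of $C(F_{m_3})$ to have large matching degree already in $H_{m_3-1}$. From this we build a copy of $F^{(k)}(G)$ with center $C(F_{m_3})$ inside $H_{m_3-1}$, which gives a contradiction.

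\emph{Step 1.} Since $e=e_{m_1}\in E(C(F_{m_1}))$ and $F_{m_1}\subseteq H_{m_1}$, Observation~\ref{obs:almostfulldegree} yields $x\mathbf{w}y\in E(H_{m_1+1})$ for every $(k-2)$-set $\mathbf{w}\subseteq V\setminus V(F_{m_1})$, and even $x\mathbf{w}'y\in E(H_{m_1+2})$ for every $(k-2)$-set $\mathbf{w}'\subseteq V\setminus V(C(F_{m_1}))$. Since $m_1<m_2<m_3$ are integers, $m_3-1\geq m_1+1$. Hence in $H_{m_3-1}$ we have
\[
d^{\text{match}}_{H_{m_3-1}}(e)\geq \frac{n-\vert V(F^{(k)}(G))\vert}{k-2},
\]
which is linear in $n$.

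\emph{Step 2.} Because $e_{m_3}=e$ was chosen to minimise $d^{\text{match}}_{H_{m_3-1}}$ over $E(C(F_{m_3}))$, every edge $f$ of $C:=C(F_{m_3})$ satisfies $d^{\text{match}}_{H_{m_3-1}}(f)\geq d^{\text{match}}_{H_{m_3-1}}(e)$. Each such $f$ therefore has a matching of $\Omega(n)$ sets $\mathbf{u}_f$ with $f\mathbf{u}_f\in E(H_{m_3-1})$.

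\emph{Step 3.} We greedily pick, for each $f\in E(C)$ in turn, one $\mathbf{u}_f$ from its matching that avoids $V(C)$ and all previously chosen sets. Since the matching sets are pairwise disjoint, each forbidden vertex kills at most one of them, and there are only $O(tk+t^2k)$ forbidden vertices, so the greedy choice succeeds for $n\geq n_0$. This gives a copy of $F^{(k)}(G)$ in $H_{m_3-1}$ with center $C$, so $E(C)\subseteq E(\cC(H_{m_3-1}))$. That contradicts the choice of $F_{m_3}$, which requires $E(C(F_{m_3}))\setminus E(\cC(H_{m_3-1}))\neq\emptyset$.

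\emph{Main obstacle.} The step needing care is whether two occurrences already suffice for the argument, which would give ``at most one''. They do not. At $m_2$ we only know $m_2-1\geq m_1$, and the $F_{m_1}$-copy lies only in $H_{m_1}$, so $e$ may have small matching degree in $H_{m_2-1}$ if $m_2=m_1+1$; indeed $e\in E(\cC(H_{m_1}))$ alone gives no contradiction. The third occurrence supplies the extra step needed for Observation~\ref{obs:almostfulldegree} to take effect. We must also verify that the matching-degree bound from Step 1 survives to $H_{m_3-1}$; it does, by monotonicity $E(H_i)\subseteq E(H_{i+1})$.
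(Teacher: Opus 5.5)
Your proof is correct and follows the paper's argument essentially step for step. A third occurrence forces $m_3-1\geq m_1+1$, so Observation~\ref{obs:almostfulldegree} makes $d^{\text{match}}_{H_{m_3-1}}(e)$ linear in $n$. The minimality in the choice of $e_{m_3}$ transfers this bound to every edge of $C(F_{m_3})$, and a greedy choice of disjoint $(k-2)$-sets then gives $C(F_{m_3})\in\cC(H_{m_3-1})$, contradicting the choice of $F_{m_3}$.
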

	\begin{proof}
		Suppose that there are three distinct indices~$m_1,m_2,m_3\in[\iota]$ with~$m_1<m_2<m_3$ such that~$e=e_{m_1}=e_{m_2}=e_{m_3}$ for some~$e\in V^{(2)}$.
		By the definition of~$m_1$-essential,~$e_{m_1}$ is an edge of some center in~$\cC(H_{m_1})$.
		Thus, the definition of the~$F^{(k)}(G)$-process entails~$d^{\text{match}}_{H_{m_1+1}}(e_{m_1})\geq n/(2(k-2))$.
		Since~$e_{m_3}$ is~$m_3$-essential, we have $$E(C(F_{m_3}))\setminus E(\cC(H_{m_3-1}))\neq \emptyset\,,$$ and, for every~$p\in E(C(F_{m_3}))$, $$d^{\text{match}}_{H_{m_3-1}}(p)\geq d^{\text{match}}_{H_{m_3-1}}(e_{m_3}) \geq d^{\text{match}}_{H_{m_1+1}}(e_{m_1})\geq n/(2(k-2))\,.$$
		By~\eqref{eq:hierarchy}, we can hence pick pairwise disjoint~$(k-2)$-sets~$\mathbf{y}_p\subseteq V\setminus V(C(F_{m_3}))$, for each~$p\in E(C(F_{m_3}))$, such that~$p\mathbf{y}_p\in E(H_{m_3-1})$.
		In other words,~$C(F_{m_3})\in\cC(H_{m_3-1})$, whence~$E(C(F_{m_3}))\subseteq E(\cC(H_{m_3-1}))$.
		This is a contradiction.
	\end{proof}

    The following observation is needed to make use of good vertices.

    \begin{observation}\label{obs:manycenteredges->good}
        If~$v\in V$ is~$m$-good for some~$m\in\mathds{N}$, then there is a family~$\cT_v\subseteq V^{(k-2)}$ of pairwise disjoint~$(k-2)$-sets with~$\vert\cT_v\vert=c$ such that~$d_{H_{m+1}}(v\mathbf{u})\geq n-c$ for every~$\mathbf{u}\in\cT_v$.
    \end{observation}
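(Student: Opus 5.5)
Since $v$ is $m$-good, there are at least $c$ distinct vertices $w_1,\dots,w_c$ with $vw_j\in E(\cC(H_m))$. For each $j$ we fix a copy $F^j$ of $F^{(k)}(G)$ in $H_m$ whose center contains the edge $vw_j$. By the definition of the process, any $k$-set that completes a new copy of $F^{(k)}(G)$ in $H_m$ lies in $H_{m+1}$. So the plan is to choose the sets $\mathbf{u}\in\cT_v$ so that, for almost every vertex $z$, the $k$-set $v\mathbf{u}z$ completes a copy of $F^{(k)}(G)$ using $F^j$ with $w_j$ replaced by $z$.

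The construction goes as follows.
\begin{itemize}
\item For each $j$, Observation~\ref{obs:almostfulldegree} (applied at step $m$) gives $v\mathbf{w}w_j\in E(H_{m+1})$ for every $(k-2)$-set $\mathbf{w}$ disjoint from $V(F^j)$. This is still a statement about the pair $vw_j$, not yet about $v\mathbf{u}$.
\item To get full degree of $v\mathbf{u}$ we take $\mathbf{u}$ to consist of $w_j$ together with $k-3$ further vertices. When $k=3$ this means $\mathbf{u}=\{w_j\}$.
\item For $\mathbf{u}=\{w_j\}\cup\mathbf{x}_j$ with $\mathbf{x}_j$ a $(k-3)$-set disjoint from $V(F^j)$, and any $z\notin V(F^j)\cup\mathbf{x}_j$, the set $vw_j\mathbf{x}_jz$ has the form $v\mathbf{w}w_j$ with $\mathbf{w}=\mathbf{x}_jz$ disjoint from $V(F^j)$.
\item That set completes a copy of $F^{(k)}(G)$ in $H_m\cup\{v\mathbf{w}w_j\}$: take $F^j$ and swap the extension vertices of the edge $vw_j$ for $\mathbf{w}$. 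All the other edges of this copy are in $H_m$, so the set lies in $E(H_{m+1})$.
\item Hence $d_{H_{m+1}}(v\mathbf{u})\geq n-|V(F^j)|-(k-3)-1\geq n-c$, because $|V(F^j)|\le t+\binom t2(k-2)$ and $t,k\ll c$ by~\eqref{eq:hierarchy}.
\end{itemize}

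The only remaining issue is pairwise disjointness of the family. We select the sets $\mathbf{u}_j=w_j\mathbf{x}_j$ greedily over $j$. The $w_j$ are already distinct. At step $j$ we choose $\mathbf{x}_j$ avoiding $v$, $V(F^j)$, all $w_{j'}$, and all previously chosen $\mathbf{x}_{j'}$. This forbids at most $O(c(k+t^2k))$ vertices, which is far fewer than $n$, so a choice always exists.

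We also need $v\notin\mathbf{u}_j$, which holds since $w_j\neq v$, and $\mathbf{u}_j\cap\mathbf{u}_{j'}=\emptyset$, which holds by construction. This produces $\cT_v=\{\mathbf{u}_1,\dots,\mathbf{u}_c\}$ with the required properties. No real obstacle is expected; the only subtle point is seeing that the sets in $\cT_v$ should contain the neighbour $w_j$, since that is what yields full codegree.
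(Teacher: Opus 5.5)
Your proposal is correct and is essentially the paper's proof. Like the paper, you take $\mathbf{u}_j=w_j\mathbf{x}_j$ with $\mathbf{x}_j$ a $(k-3)$-set outside $V(F^j)$, and use the first part of Observation~\ref{obs:almostfulldegree} to get codegree at least $n-t-(k-2)\binom{t}{2}-(k-3)$ in $H_{m+1}$. You then choose the $\mathbf{x}_j$ greedily so that they are pairwise disjoint and avoid every $w_{j'}$.
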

    
    \begin{proof}
        Since~$v$ is~$m$-good, there are~$c$ distinct vertices~$w_1, \ldots, w_c$ with~$vw_i\in E(\cC(H_m))$.
        So for each~$i\in[c]$, there is some copy~$\widetilde{F}_i$ of~$F^{(k)}(G)$ in~$H_m$ with~$vw_i\in C(\widetilde{F}_i)$.
		Therefore, Observation~\ref{obs:almostfulldegree} entails that for every~$(k-3)$-set~$\mathbf{x}_i \subseteq V \setminus V(\widetilde{F}_i)$, the set~$vw_i\mathbf{x}_i$ has degree at least~$n-t-(k-2)\binom{t}{2}-(k-3)>n-c$ in~$H_{m+1}$.
		By~\eqref{eq:hierarchy}, we can choose~$\mathbf{x}_1,\ldots,\mathbf{x}_t$ so that they are pairwise disjoint and so that each of them is disjoint from~$\{w_1,\dots,w_c\}$.
		Thus, the~$c$ pairwise disjoint~$(k-2)$-subsets~$w_i\mathbf{x}_i$ are as desired.
    \end{proof}

    For each~$m$-good vertex~$v$, fix a family~$\cT_v$ as in Observation~\ref{obs:manycenteredges->good}.
    The next lemma states that the process will stop soon after there are many good vertices.
    
	\begin{lemma} \label{lem:good+6}
		If~$\vert V_m^{\text{good}}\vert\geq s$ for some~$m\in\mathds{N}_0$, then~$\iota\leq m+6$.
	\end{lemma}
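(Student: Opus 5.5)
The plan is to show that once there are $s$ good vertices, within a bounded number of further steps the hypergraph contains every $k$-set that could ever be added, so the process stabilises. Fix a set $S\subseteq V_m^{\text{good}}$ with $|S|=s$. For each $v\in S$ take the family $\cT_v$ from Observation~\ref{obs:manycenteredges->good}. Each $v\mathbf{u}$ with $\mathbf{u}\in\cT_v$ has degree at least $n-c$ in $H_{m+1}$, so $v\mathbf{u}$ behaves almost like a ``full'' $(k-1)$-set.

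\emph{Step 1: build many copies with arbitrary center edges.}
We show that in $H_{m+2}$ (or $H_{m+3}$) there are copies of $F^{(k)}(G)$ whose center contains any prescribed pair $xy$ with $x\in S$ and $y$ essentially arbitrary. The construction places $x=v$ at a vertex of $G$ and places other good vertices of $S$ at the remaining vertices of $G$. For $k\ge4$ each center edge $vw$ uses a set of the form $\mathbf{u}\setminus\{\text{one vertex}\}$ together with new vertices. For $k=3$ it uses that $vw\mathbf{x}$ is an edge for almost all $\mathbf{x}$ whenever $w$ lies in some $\mathbf{u}\in\cT_v$. The disjointness of the $\mathbf{u}$ and the large degrees, with $c\ll s$ and $n$ large, let us embed the extension vertices greedily and disjointly.

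Concretely, this shows that every pair $vw$ with $v\in S$ and $w$ outside a set of size $O(c)$ becomes a center edge by step $m+2$. Hence by Observation~\ref{obs:almostfulldegree}, at step $m+4$ every $k$-set containing such a pair together with $k-2$ vertices avoiding a bounded set is an edge.

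\emph{Step 2: spread fullness to all pairs.}
Using that every vertex $v\in S$ is now in center edges with almost all vertices, we build, for an arbitrary pair $xy\in V^{(2)}$, a copy of $F^{(k)}(G)$ in $H_{m+4}$ having $xy$ as a center edge. Put $x,y$ on an edge of $G$, put vertices of $S$ on the other vertices of $G$, and note that all other center edges meet $S$ or join two vertices of $S$. The one missing edge $xy\mathbf{z}$ is then added at step $m+5$ for every $\mathbf{z}$ avoiding a bounded set.

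This is the delicate part. The edge $xy$ itself must come from some $k$-set; the edges incident to $xy$ are only added if $xy$ appears in a copy with the other edges present. So really the argument is that every $k$-set $e$ containing at least one pair $vw$ with $v\in S$ is present early. Then any $e\in V^{(k)}$ completes a copy: choose the position in $G$ of an edge $e'$ of $G$ to be realised by $e$ itself, with the other center vertices in $S$.

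\emph{Step 3: conclude.}
Check that $H_{m+5}$ or $H_{m+6}$ contains every $k$-set that is ever added, namely every $e$ such that some copy of $F^{(k)}(G)$ uses $e$. Hence $H_{m+6}=H_{m+7}$ and $\iota\le m+6$.

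The main obstacle is the case where $e$ has no edge of $G$ whose pair lies in a suitable relation to $S$. For $k=3$ one must handle the edges not containing any pair that is already ``full''. I expect to need care in choosing, for an edge $e=xyz$, which pair of $e$ plays the center edge. I also expect to need to verify that the bounded exceptional sets, of size $O(c+tk)$, can always be avoided using $|S|=s\gg c$.
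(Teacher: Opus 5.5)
Your overall plan matches the paper's architecture: use the good vertices to make pairs of good vertices into center edges, make those pairs ``full'' via Observation~\ref{obs:almostfulldegree}, and then show every $k$-set completes a copy whose other center vertices are good. However, the step that makes this work is missing, and your closing paragraph in effect concedes it.

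\textbf{The gap.} To let an arbitrary $e\ni x,y$ complete a copy, you need, for each good vertex $v$ placed at a $G$-neighbour of $x$'s position, a $k$-edge containing both $x$ and $v$, with extension sets pairwise disjoint across the copy. Your Step~1 only gives this when $x$ avoids an exceptional set (call it $X_v$) of size $O(c)$. Nothing you have established stops a fixed $x$ from lying in $X_v$ for \emph{every} $v\in S$: the degree bound in Observation~\ref{obs:manycenteredges->good} controls how many vertices are excluded, not which ones. So ``all other center edges meet $S$'' is not enough. Likewise, ``every $k$-set containing a pair $vw$ with $v\in S$ is present early'' is, for arbitrary $w$, precisely what is not yet known.

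\textbf{The missing idea.} Exploit the freedom in the \emph{extension set}. Realise the center pair $vx$ by the $k$-set $\{v,w,x\}\cup\mathbf{u}'$, where $w$ is a second good vertex such that $vw$ is already a center edge. This $k$-set contains the full pair $vw$, so it is an edge whatever $x$ is. The ingredient is already in the last sentence of your Step~1, but you never deploy it this way. This is exactly what the paper's $w^i_{t-1}\in W_{t-1}$ and $w^i_t\in W_t$ do in the copy of $F^{(k)}(K_t^{(2)-})$ on $\{w_1,\dots,w_{t-2},a,b\}$: the edges $w_i\mathbf{u}'_iw^i_{t-1}a$ and $w_i\mathbf{u}''_iw^i_tb$ realise the center pairs $w_ia$ and $w_ib$. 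Your worry about ``which pair of $e$ plays the center edge'', and about $e$ containing no full pair, is beside the point. The $k$-set being added needs no property at all; only the other edges of the copy do.

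\textbf{Two further repairs.}
\begin{enumerate}
\item To make pairs of good vertices into center edges, you must choose them compatibly. The paper takes disjoint classes $W_1,\dots,W_t$ with $v\mathbf{u}x\in E(H_{m+1})$ for all $v\in W_i$, $x\in W_j$ with $i<j$, and all $\mathbf{u}\in\cT_v$. The extension set for the center pair $vx$ is then $\mathbf{u}$ itself. Your description for $k\ge4$ (``$\mathbf{u}\setminus\{\text{one vertex}\}$ together with new vertices'') does not produce a $k$-set known to be an edge.
\item The ``bounded set'' to be avoided in Observation~\ref{obs:almostfulldegree} is the center of the witnessing copy. For $\{v,w,x\}\cup\mathbf{u}'$ to be an edge for \emph{every} $x$, you need interchangeable witnesses. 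The paper gets these by taking $|W_a|=\ell$ and varying the other center vertices, which yields \eqref{eq:edgescliqueextension} for every $(k-2)$-set avoiding only the two endpoints.
\end{enumerate}
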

	
	\begin{proof}
		
		We choose pairwise disjoint~$\ell$-subsets~$W_1, \ldots, W_t \subseteq V_m^{\text{good}}$ such that for all integers~$1\leq i<j\leq t$ and for all~$v\in W_i$,~$x\in W_j$, and~$\mathbf{u}\in\cT_v$ we have $v\mathbf{u} x\in E(H_{m+1})$.
		To see that this is possible, first select~$W_1\subseteq V_m^{\text{good}}$ with~$\vert W_1\vert=\ell$ arbitrarily.
		Then assume that~$W_1,\dots, W_{i}$ have been chosen for some~$i\in[t-1]$.
		Note that a vertex~$w\in V_m^\text{good}$ can only not be selected for~$W_{i+1}$ if it lies in~$W_1\cup\dots\cup W_i$ or if there is some vertex~$v\in W_1\cup\dots\cup W_i$ and some~$\mathbf{u}\in \cT_v$ such that~$v\mathbf{u}w\notin E(H_{m+1})$.
        Hence, in light of Observation~\ref{obs:manycenteredges->good}, there are at most~$i\cdot \ell+i\cdot \ell\cdot c^2$ vertices in~$V_m^\text{good}$ which cannot be selected as vertices for~$W_{i+1}$.
		Due to~\eqref{eq:hierarchy}, we can thus still choose~$W_{i+1}\subseteq V_m^{\text{good}}$ as desired.
		
		Arbitrarily pick~$w_i^\star \in W_i$ for each~$i \in [t]$.
		We claim that there are pairwise disjoint~$(k-2)$-sets~$\mathbf{u}_{ij}\in\cT_{w_i^\star}$, for all~$1\leq i<j\leq t$, such that~$\mathbf{u}_{ij}\cap\bigcup_{a\in[t]}W_a=\emptyset$.
		Choose the sets~$\mathbf{u}_{ij}$ greedily following the lexicographic order of~$[t]^{(2)}$, which we denote by~$\preceq_{\text{lex}}$. 
		When picking~$\mathbf{u}_{ij}\in\cT_{w_i^\star}$ at any point during this selection, we only need to guarantee that it is disjoint from the set $$B=\bigcup_{a\in[t]}W_a\cup\bigcup_{i'j'\in[t]^{(2)}\text{ with }i'j'\prec_{\text{lex}} ij}\mathbf{u}_{i'j'}\,.$$
		Since~$\vert B\vert\leq t\cdot\ell+\binom{t}{2}\cdot(k-2)\ll c$ and the sets in~$\cT_{w_i^\star}$ are pairwise disjoint, there is indeed a valid choice for~$\mathbf{u}_{ij}$.
		
		Given~$i,j\in[t]$ with~$i<j$, the choice of~$W_i$ and~$W_j$ ensures that for every~$\mathbf{u} \in \cT_{w_i^\star}$ we have~$w_i^\star \mathbf{u} w_j^\star \in E(H_{m+1})$, whence~$w_i^\star \mathbf{u}_{ij} w_j^\star \in E(H_{m+1})$.
		Consequently,~$H_{m+1}$ contains a copy~$F$ of~$F^{(k)}(K_t^{(2)})$ with~$V(C(F))=\{w_1^\star, \ldots, w_t^\star\}$.
		In fact, since~$w_1^\star,\dots,w_t^\star$ were chosen arbitrarily, for every selection of vertices~$w_i\in W_i$,~$i\in[t]$,~$H_{m+1}$ contains a copy of~$F^{(k)}(K_t^{(2)})$ such that the vertex set of the center of this copy is~$\{w_1, \ldots, w_t\}$.
		By Observation~\ref{obs:almostfulldegree},
		\begin{align}\label{eq:edgescliqueextension}
			\text{$w_i\mathbf{u}w_j$ forms an edge in~$H_{m+3}$}
		\end{align}
		for all~$ij\in[t]^{(2)}$,~$w_i\in W_i$,~$w_j\in W_j$, and every~$(k-2)$-set~$\mathbf{u} \subseteq V \setminus \{w_i,w_j\}$.
		
        Now let~$a,b\in V$ be arbitrary and pick~$w_i\in W_i\setminus\{a,b\}$, for all~$i\in[t-2]$.
		Note that~\eqref{eq:edgescliqueextension} entails that for every~$w_{t-1}\in W_{t-1}\setminus\{a\}$, every~$i\in[t-2]$, and every~$(k-3)$-set~$\mathbf{u'}\subseteq V\setminus\{a,w_i,w_{t-1}\}$ we have~$w_i\mathbf{u'}aw_{t-1}\in E(H_{m+3})$.
		Similarly, for every~$w_t\in W_t\setminus\{b\}$, every~$i\in[t-2]$ and every~$(k-3)$-set~$\mathbf{u'}\subseteq V\setminus\{b,w_i,w_t\}$ we have~$w_i\mathbf{u'}bw_t\in E(H_{m+3})$.
		Using the hierarchy~\eqref{eq:hierarchy}, one can thus observe that it is possible to choose~$\widetilde{\mathbf{u}}_{ij}\subseteq V^{(k-2)}$,~$\mathbf{u}'_i,\mathbf{u}''_i\subseteq V^{(k-3)}$, and~$w_{t-1}^i\in W_{t-1}$ as well as~$w_t^i\in W_t$, for all~$i,j\in[t-2]$, such that the following holds.
        The sets~$\widetilde{\mathbf{u}}_{ij},\mathbf{u}'_i,\mathbf{u}''_i,\{w_{t-1}^i\},\{w_t^i\}$, and~$\{w_1,\dots,w_{t-2},a,b\}$ are all pairwise disjoint and~$w_i\widetilde{\mathbf{u}}_{ij}w_j\in E(H_{m+3})$,~$w_i\mathbf{u}'_iw_{t-1}^ia\in E(H_{m+3})$, and~$w_i\mathbf{u}''_iw_t^ib\in E(H_{m+3})$.
        This means that in~$H_{m+3}$ there is a copy of~$F^{(k)}(K_t^{(2)-})$ whose center has the vertex set~$\{w_1,\dots,w_{t-2},a,b\}$ with~$ab$ being the missing edge.
		Hence,~$H_{m+4}$ must contain a copy of~$F^{(k)}(K_t^{(2)})$ whose center has the same vertex set.
		As argued above (see~\eqref{eq:edgescliqueextension}), this implies that for every~$(k-2)$-subset~$\mathbf{u}\subseteq V\setminus\{a,b\}$ we have~$a\mathbf{u}b\in E(H_{m+6})$.
		Since~$a$ and~$b$ were arbitrary vertices, we infer that~$H_{m+6}$ is complete, whereby~$\iota\leq m+6$.
	\end{proof}

	Given a vertex~$v$ and~$m\in\mathds{N}$, 
    let~$\cE_{\leq m}$ be the set of essential pairs that are~$i$-essential for some~$i\in[m]$, and let~$\cE_{\leq m}^v$ be the subset of those pairs incident to~$v$.
    The definition of an~$m$-good vertex gives that any vertex~$v$ with~$\vert\cE_{\leq m}^v\vert\geq c$ is~$m$-good.
    Therefore, Lemma~\ref{lem:good+6} states that we are done if there are at least~$s$ distinct vertices~$v$ with~$\vert\cE_{\leq\Gamma}^v\vert\geq c$.
    To show that there are many good vertices, it is helpful to argue that no vertex can be contained in too many essential pairs.
    
	\begin{lemma} \label{lem:essential+bounded}
		For each vertex~$v\in V$, there are at most~$c^3$ essential pairs containing~$v$.
	\end{lemma}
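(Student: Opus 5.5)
Suppose some vertex $v$ lies in more than $c^3$ essential pairs; we derive a contradiction. The argument combines two facts: an essential pair must have small matching degree at the moment it becomes essential, while a good vertex has large matching degree to almost every other vertex.

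\emph{Step 1: essential pairs have small matching degree.} Let $m$ be a real step. We claim that $d^{\text{match}}_{H_{m-1}}(e_m)<(k-2)\binom{t}{2}+t$. The argument is the one in the proof of Observation~\ref{obs:atmosttwiceessential}. Suppose instead $d^{\text{match}}_{H_{m-1}}(e_m)\ge (k-2)\binom{t}{2}+t$. By the minimality in the choice of $e_m$, every $p\in E(C(F_m))$ then satisfies the same bound. We can therefore pick pairwise disjoint $(k-2)$-sets $\mathbf{y}_p\subseteq V\setminus V(C(F_m))$ with $p\mathbf{y}_p\in E(H_{m-1})$ greedily, one pair at a time. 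Each earlier choice, together with $V(C(F_m))$, blocks fewer sets of a maximum matching than the matching size. The sets $\mathbf{y}_p$ give $C(F_m)\in\cC(H_{m-1})$, which contradicts $E(C(F_m))\setminus E(\cC(H_{m-1}))\neq\emptyset$. Since $t,k\ll c$, we obtain $d^{\text{match}}_{H_{m-1}}(e_m)<c-1$.

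\emph{Step 2: once $v$ is good, almost all pairs at $v$ have large matching degree.} Order the essential pairs containing $v$ by the first index at which each becomes essential. Let $m$ be the index at which the $c$-th distinct such pair first becomes essential. Every essential pair is an edge of a center in the corresponding $H_i$, and $E(\cC(H_i))$ is monotone in $i$, so $v$ is $m$-good. By Observation~\ref{obs:manycenteredges->good} we have a family $\cT_v$ of $c$ pairwise disjoint $(k-2)$-sets with $d_{H_{m+1}}(v\mathbf{u})\ge n-c$ for each $\mathbf{u}\in\cT_v$. Define the exceptional set $X_v$ to consist of all vertices in $\bigcup\cT_v$ together with all $w$ such that $v\mathbf{u}w\notin E(H_{m+1})$ for some $\mathbf{u}\in\cT_v$. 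Then $|X_v|\le c(k-2)+c\cdot c$. For $w\notin X_v$, the sets in $\cT_v$ all avoid $w$ and are pairwise disjoint, so $d^{\text{match}}_{H_{m+1}}(vw)\ge c$. By monotonicity the same bound holds in $H_{m'-1}$ for every $m'\ge m+2$.

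\emph{Step 3: counting.} Combining Steps 1 and 2, any pair $vw$ that is $m'$-essential with $m'\ge m+2$ must satisfy $w\in X_v$. Hence the essential pairs containing $v$ fall into two groups:
\begin{itemize}
\item at most $c$ distinct pairs first becoming essential at an index $\le m$, by the choice of $m$;
\item at most one further pair, namely $e_{m+1}$;
\item at most $|X_v|\le c^2+c(k-2)$ pairs $vw$ with $w\in X_v$.
\end{itemize}
This gives a total of at most $c+1+c^2+c(k-2)\le c^3$. If $v$ lies in fewer than $c$ essential pairs altogether, the bound holds trivially.

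The main obstacle is Step 1, specifically checking that the greedy choice of disjoint $(k-2)$-sets works with a threshold depending only on $t$ and $k$. This threshold must lie strictly below the matching degree $c-1$ produced in Step 2, which is guaranteed because $t,k\ll c$ in~\eqref{eq:hierarchy}. The remaining care is in the bookkeeping of indices: one must use $m'-1\ge m+1$ so that the edges of $H_{m+1}$ are available.
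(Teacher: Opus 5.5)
Your proof is correct and follows the paper's argument essentially step for step. It uses the same choice of $m$ as the first index at which $v$ lies in $c$ essential pairs, and the same use of $\cT_v$ to show that $vw$ has matching degree at least $c$ in $H_{m+1}$ for all $w$ outside an exceptional set of size $O(c^2)$. It reaches the same contradiction for $m'\ge m+2$ (minimality of $e_{m'}$ lets one rebuild $C(F_{m'})$ as a center in $H_{m'-1}$), and gives the same count $c+1+O(c^2)\le c^3$. The only cosmetic differences are that you state the ``small matching degree'' fact as a separate step with the explicit threshold $(k-2)\binom{t}{2}+t$, and you explicitly put $\bigcup\cT_v$ into the exceptional set; the paper gets this for free because $x\in\mathbf{y}$ already forces $x\notin N_{H_{m+1}}(v\mathbf{y})$.
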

	\begin{proof}
		If there are fewer than~$c$ essential pairs containing~$v$, there is nothing to show.
		Let~$m$ be the smallest integer such that~$\vert\cE_{\leq m}^v\vert= c$.
		Then~$v$ is~$m$-good by definition.
		
		Now we will argue that for every~$x\in \bigcap_{\mathbf{y}\in\cT_v}N_{H_{m+1}}(v\mathbf{y})$ the pair~$vx$ cannot be~$m'$-essential for any~$m'>m+1$.
		For the sake of contradiction assume the contrary.
		Then the~$m'$-essential copy~$F_{m'}$ of~$F^{(k)}(G)$ in~$H_{m'}$ satisfies~$vx\in E(C(F_{m'}))$ and~$E(C(F_{m'}))\setminus E(\cC(H_{m'-1}))\neq\emptyset$.
		Moreover, by the definition of~$m'$-essential, it must hold for every~$p\in E(C(F_{m'}))$ that $$d^{\text{match}}_{H_{m'-1}}(p)\geq d^{\text{match}}_{H_{m'-1}}(vx)\geq d^{\text{match}}_{H_{m+1}}(vx)\geq\vert\cT_v\vert=c\,.$$
		By~\eqref{eq:hierarchy}, we can hence pick pairwise disjoint~$(k-2)$-sets~$\mathbf{y}_p\subseteq V\setminus V(C(F_{m'}))$, for each~$p\in E(C(F_{m'}))$, such that~$p\mathbf{y}_p\in E(H_{m'-1})$.
		In other words,~$C(F_{m'})\in\cC(H_{m'-1})$, whence~$E(C(F_{m'}))\subseteq E(\cC(H_{m'-1}))$.
		This is a contradiction.
		
		Therefore, (for~$m'>m+1$)~$v$ can only form an~$m'$-essential pair with vertices in $V\setminus\bigcap_{\mathbf{y} \in \cT_v} N_{H_{m+1}}(v\mathbf{y})$, and the size of this set is at most~$c^2$. 
		Thus, the number of all essential pairs containing~$v$ is at most~$c +1+ c^2 < c^3$.
	\end{proof} 

    Next we show that there is a large matching of essential pairs with some nice properties.
    
	\begin{lemma}\label{lem:matchingessential}
		If~$\iota>\Gamma$, then there is a subset~$J\subseteq[\Gamma]$ with~$\vert J\vert=\Gamma_1$ such that
		\begin{enumerate}[label=$(\arabic*)$]
			\item \label{it:essentialdisjointothercopies} $e_i \cap V(F_j) = \emptyset$ for all distinct~$i,j\in J$, and
			\item \label{it:playthesamerole} $e_i$ plays the same r\^ole in~$F_i$ as~$e_j$ does in~$F_j$ for all~$i,j\in J$.
		\end{enumerate}
	\end{lemma}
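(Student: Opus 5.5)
We plan to obtain $J$ in three stages: collect many real steps, pigeonhole on the rôle of the essential pair, and then pass to a large independent set in an auxiliary conflict graph of bounded degree.

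\emph{Collecting real steps.} Since $\iota>\Gamma$, Observation~\ref{obs:1/3realstep} shows that among $[\Gamma]$ at least roughly $\Gamma/3$ indices are real steps. For each such $m$ the essential pair $e_m$ and the essential copy $F_m$ are defined. By Observation~\ref{obs:atmosttwiceessential}, each pair occurs as $e_m$ for at most two indices. Keeping one index per pair, we obtain a set $J_0\subseteq[\Gamma]$ with $|J_0|\geq\Gamma/6$ whose essential pairs $e_m$ are pairwise distinct.

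\emph{Fixing the rôle.} The position of $e_m$ in $F_m$ is an edge of $G$, and there are at most $\binom{t}{2}$ of these. By pigeonhole we pass to $J_1\subseteq J_0$ with $|J_1|\geq \Gamma/(6\binom t2)$ such that all $e_m$, $m\in J_1$, play the same rôle. This already gives \ref{it:playthesamerole}, and it persists under taking subsets.

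\emph{Bounding conflicts.} For \ref{it:essentialdisjointothercopies} we build an auxiliary graph $A$ on $J_1$, joining $i\neq j$ whenever $e_i\cap V(F_j)\neq\emptyset$ or $e_j\cap V(F_i)\neq\emptyset$. We then bound the degrees of $A$.

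\begin{itemize}
\item \emph{Out-conflicts of $j$.} For a fixed $j$, the copy $F_j$ has $|V(F_j)|=t+(k-2)|E(G)|\leq t^2k$ vertices. By Lemma~\ref{lem:essential+bounded}, each such vertex lies in at most $c^3$ essential pairs. Since the $e_i$ with $i\in J_1$ are distinct, the number of $i$ with $e_i\cap V(F_j)\neq\emptyset$ is at most $t^2kc^3$.
\item \emph{In-conflicts of $i$.} The other direction is not bounded pointwise: a fixed $e_i$ might meet many copies $F_j$. However, the directed "conflict" relation has at most $t^2kc^3|J_1|$ ordered pairs in total. Hence the undirected graph $A$ has at most $t^2kc^3|J_1|$ edges, so its average degree is at most $2t^2kc^3$.
\end{itemize}

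\emph{Extracting $J$.} By Turán's theorem (equivalently, a greedy argument on the at least $|J_1|/2$ vertices of degree at most $4t^2kc^3$), $A$ has an independent set of size at least
\[
|J_1|/(4t^2kc^3+2)\geq \Gamma/(6\binom t2(4t^2kc^3+2)).
\]
By the hierarchy~\eqref{eq:hierarchy}, $c\ll\Gamma_1\ll\Gamma$, so this is at least $\Gamma_1$. Any $\Gamma_1$-subset of the independent set serves as $J$: it satisfies \ref{it:essentialdisjointothercopies} by independence in $A$, and \ref{it:playthesamerole} by the choice of $J_1$.

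The main obstacle is the asymmetry of the conflict relation. Lemma~\ref{lem:essential+bounded} only controls how many essential pairs meet a fixed copy, not how many copies a fixed pair meets. The plan handles this by counting edges of $A$ and using average degree rather than maximum degree. A secondary point is ensuring distinctness of the $e_m$, which is needed so that the count via Lemma~\ref{lem:essential+bounded} applies; Observation~\ref{obs:atmosttwiceessential} supplies this.
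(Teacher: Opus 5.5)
Your proposal is correct and follows essentially the same route as the paper. Like the paper, you use Observations~\ref{obs:1/3realstep} and~\ref{obs:atmosttwiceessential} to get linearly many indices with distinct essential pairs, use Lemma~\ref{lem:essential+bounded} to bound by $|V(F_j)|\cdot c^3$ the number of $i$ with $e_i\cap V(F_j)\neq\emptyset$, count all conflicts globally, apply Tur\'an, and pigeonhole on the position of $e_i$. The differences are only cosmetic: you pigeonhole on the r\^ole before rather than after Tur\'an, and you take an independent set in the conflict graph where the paper takes a clique in its complement. Also, your parenthetical greedy argument only yields $|J_1|/(8t^2kc^3+2)$, but Tur\'an itself gives the displayed bound, and the constant is irrelevant anyway.
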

	
	\begin{proof}
        If~$\iota>\Gamma$, then it follows from Observation~\ref{obs:1/3realstep} and Observation~\ref{obs:atmosttwiceessential} that~$\vert\cE_{\leq\Gamma}\vert\geq\Gamma/7$.
        Thus we can choose~$\widetilde{J}\subseteq[\Gamma]$ with~$\vert\widetilde{J}\vert=\Gamma/7$ such that~$e_i\neq e_j$ for all~$i,j\in\widetilde{J}$ with~$i\neq j$.
		Now consider an auxiliary graph~$A$ on the vertex set~$\widetilde{J}$ where~$ij$ forms an edge if~$e_j\cap V(F_i)=\emptyset=e_i\cap V(F_j)$.
        By Lemma~\ref{lem:essential+bounded}, for every~$v\in V$ there are at most~$c^3$ indices~$i\in \widetilde{J}$ with~$v\in e_i$.
        Consequently, for every~$i\in\widetilde{J}$ there are at most~$\big(t+(k-2)\binom{t}{2}\big)\cdot c^3\leq c^4$ indices~$j\in\widetilde{J}$ with~$e_j\cap V(F_i)\neq \emptyset$.
        This in turn implies that~$\vert E(A)\vert\geq\binom{\vert\widetilde{J}\vert}{2}-\vert\widetilde{J}\vert\cdot c^4$.
        Due to Tur\'an's theorem and the hierarchy~\eqref{eq:hierarchy}, there is a clique of size~$\Gamma_1^2$ in~$A$.
        Note that this is the same as saying that there is some~$\hat{J}\subseteq\widetilde{J}$ with~$\vert\hat{J}\vert=\Gamma_1^2$ satisfying property~\ref{it:essentialdisjointothercopies}.
        
		Since there are at most~$\binom{t}{2}$ edges in~$G$, an averaging argument guarantees the existence of a subset~$J\subseteq\hat{J}$ such that~$\vert J\vert=\Gamma_1$ and~$e_i$ is in the same position of~$F_i$ for all~$i\in J$.
	\end{proof}

    Since we are done if~$\iota\leq\Gamma$, we may assume that a set~$J$ as in Lemma~\ref{lem:matchingessential} indeed exists.
	For each~$i\in J$, let~$e_i=a_ib_i$ such that the position of~$a_i$ in $F_i$ is the same that of~$a_j$ in~$F_j$ for all~$i\neq j$.
	
	\begin{observation}\label{obs:manyalternatives+good}
		If for some~$i\in J$ and~$m\in\mathds{N}$ there are at least~$c$ distinct indices~$j\in J\setminus\{i\}$ such that~$C(F_i)[a_i \to a_j]\in\cC(H_m)$, then~$b_i$ is~$m$-good.
	\end{observation}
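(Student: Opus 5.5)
The plan is to read off the definition of $m$-good directly: it suffices to exhibit $c$ distinct vertices $w$ with $b_iw\in E(\cC(H_m))$, and the natural candidates are the vertices $a_j$ themselves.

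\textbf{Step 1: each hypothesis index gives a center edge at $b_i$.} Let $j_1,\dots,j_c\in J\setminus\{i\}$ be distinct indices with $C(F_i)[a_i\to a_{j_r}]\in\cC(H_m)$. First I check that the replacement is well defined, i.e.\ $a_{j_r}\notin V(C(F_i))$. This holds by property~\ref{it:essentialdisjointothercopies} of Lemma~\ref{lem:matchingessential}: since $i\neq j_r$, we have $e_{j_r}\cap V(F_i)=\emptyset$, and $a_{j_r}\in e_{j_r}$. Next, $a_ib_i=e_i\in E(C(F_i))$ and $b_i\neq a_i$, so by the definition of $C[x\to y]$ the pair $b_ia_{j_r}$ is an edge of $C(F_i)[a_i\to a_{j_r}]$. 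Since this graph lies in $\cC(H_m)$, we get $b_ia_{j_r}\in E(\cC(H_m))$ for every $r\in[c]$.

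\textbf{Step 2: the $c$ vertices are distinct.} The essential pairs $e_{j_1},\dots,e_{j_c}$ are disjoint from one another by property~\ref{it:essentialdisjointothercopies}, because $e_{j}\subseteq V(F_j)$ while $e_{j'}\cap V(F_j)=\emptyset$ for $j\neq j'$. Hence the vertices $a_{j_1},\dots,a_{j_c}$ are distinct. They are also all different from $b_i$, since $b_i\in V(F_i)$ and each $a_{j_r}\notin V(F_i)$.

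\textbf{Conclusion.} So $b_i$ has at least $c$ distinct neighbours $w$ with $b_iw\in E(\cC(H_m))$, which means $b_i$ is $m$-good. The only point needing care is the distinctness and well-definedness in Step~2, and this is supplied entirely by Lemma~\ref{lem:matchingessential}. I expect no real obstacle; the observation is essentially an unwinding of the definitions.
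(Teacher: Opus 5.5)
Your proof is correct and is essentially the paper's argument: each such index $j$ yields the center edge $a_jb_i\in E(\cC(H_m))$, so $b_i$ is $m$-good. Your explicit check via property~\ref{it:essentialdisjointothercopies} of Lemma~\ref{lem:matchingessential} fills in details the paper leaves implicit, namely that the replacement is well defined and that the vertices $a_j$ are pairwise distinct and different from $b_i$.
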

	
	\begin{proof}
		Let~$J_i$ be a set of~$c$ such indices~$j$.
		For each~$j\in J_i$, since~$C(F_i)[a_i\to a_j]\in\cC(H_m)$, we have~$a_jb_i\in E(\cC(H_m))$.
        Thus,~$b_i$ is~$m$-good by definition.
	\end{proof}
	
    The previous observation allows us to finish the proof if for many~$ij\in J^{(2)}$ replacing~$a_i$ by~$a_j$ in~$C(F_i)$ yields another center.
    The next lemma reveals that we indeed also gain something if this does not hold.
    
	\begin{lemma}\label{lem:cannotreplace}
		If~$C(F_i)[a_i \to a_j]\notin\cC(H_{\Gamma+2})$ for some~$i,j\in J$, then
		\[
		N_{C(F_i)}(a_i)\cap\bigl(V(C(F_j))\setminus N_{C(F_j)}(a_j)\bigr)\neq\emptyset
		\text{ or }
		N_{C(F_j)}(a_j)\cap\bigl(V(C(F_i))\setminus N_{C(F_i)}(a_i)\bigr)\neq\emptyset\,.
		\]
	\end{lemma}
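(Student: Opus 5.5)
We argue by contraposition: assume both intersections are empty and show that $C(F_i)[a_i\to a_j]\in\cC(H_{\Gamma+2})$. Set $C=C(F_i)[a_i\to a_j]$; it is a copy of $G$ because $a_j\notin V(F_i)$ by Lemma~\ref{lem:matchingessential}\ref{it:essentialdisjointothercopies}. We have to show that every edge of $C$ can be extended by pairwise disjoint $(k-2)$-sets, avoiding $V(C)$, to edges of $H_{\Gamma+2}$.

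\emph{Edges of $C$ avoiding $a_j$.} These are edges $uw$ of $C(F_i)$ with $u,w\neq a_i$. Since $i\le\Gamma$, the copy $F_i$ lies in $H_i\subseteq H_\Gamma$. By Observation~\ref{obs:almostfulldegree}, $u\mathbf{w}'w\in E(H_{\Gamma+2})$ for every $(k-2)$-set $\mathbf{w}'\subseteq V\setminus V(C(F_i))$. This set must also avoid $a_j$, which is harmless.

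\emph{Edges of $C$ at $a_j$.} These are the edges $a_jw$ with $w\in N_{C(F_i)}(a_i)$. Because the first intersection is empty, every such $w$ that lies in $V(C(F_j))$ also lies in $N_{C(F_j)}(a_j)$. Two cases arise.

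\emph{Case $w\in V(C(F_j))$.} Then $a_jw\in E(C(F_j))$, and since $F_j\subseteq H_\Gamma$, Observation~\ref{obs:almostfulldegree} gives $a_j\mathbf{w}'w\in E(H_{\Gamma+2})$ for all $(k-2)$-sets $\mathbf{w}'\subseteq V\setminus V(C(F_j))$.

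\emph{Case $w\notin V(C(F_j))$.} This is the main obstacle, because then $a_jw$ need not be a center edge at all. I would first look for another copy whose center contains $a_jw$; failing that, I would try to place $w$ into $C(F_j)$ via the symmetric replacement $C(F_j)[x\to w]$, where $x$ is a neighbour of $a_j$, and iterate. This is presumably where the second condition (the emptiness of the second intersection, i.e.\ every neighbour of $a_j$ in $C(F_j)$ that lies in $C(F_i)$ is a neighbour of $a_i$) would be used. I would also check whether the role-matching in Lemma~\ref{lem:matchingessential}\ref{it:playthesamerole} forces $\deg_{C(F_i)}(a_i)=\deg_{C(F_j)}(a_j)$ and thereby gives a bijection between the two neighbourhoods. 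I am not confident this case resolves cleanly, and I would expect a possible strengthening of the hypothesis or a use of the essential-pair minimality of $e_i$ and $e_j$.

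\emph{Assembling the copy.} Once each edge of $C$ admits almost every $(k-2)$-set outside a bounded set, namely all but $O(t^2k)$ vertices, we choose the extension sets greedily. They must be pairwise disjoint and disjoint from $V(C)$, which the hierarchy~\eqref{eq:hierarchy} and $n\ge n_0$ permit. This yields a copy of $F^{(k)}(G)$ in $H_{\Gamma+2}$ with center $C$, contradicting the assumption.
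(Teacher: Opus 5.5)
\textbf{There is a genuine gap.} It sits at the heart of the lemma: you leave the edges $a_jw$ of $C=C(F_i)[a_i\to a_j]$ with $w\in N_{C(F_i)}(a_i)\setminus V(C(F_j))$ unresolved. This case always occurs. Indeed $w=b_i$ falls into it, since $b_i\notin V(F_j)$ by Lemma~\ref{lem:matchingessential}\ref{it:essentialdisjointothercopies}. Your final assembly step also fails for these edges. You have no control over most $(k-2)$-sets extending such a pair $a_jw$: the only extensions you can certify in $H_{\Gamma+2}$ are those containing a neighbour of $a_j$ in $C(F_j)$. So a free greedy choice is not available there. The routes you list (another center through $a_jw$, iterated replacements, the minimality in the choice of $e_m$, a stronger hypothesis) are all unnecessary. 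You did correctly sense that the role-matching should give a bijection between neighbourhoods, but you never say what it is for.

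\textbf{The missing idea} is to use a non-shared neighbour of $a_j$ in $C(F_j)$ as an \emph{extension vertex} for $a_jw$, which works because $k\geq 3$.
\begin{itemize}
\item Since $a_i$ and $a_j$ occupy the same position (Lemma~\ref{lem:matchingessential}\ref{it:playthesamerole}), $|N_{C(F_i)}(a_i)|=|N_{C(F_j)}(a_j)|$. Removing the common neighbours, write $N_{C(F_i)}(a_i)\setminus N_{C(F_j)}(a_j)=\{v_1,\dots,v_p\}$ and $N_{C(F_j)}(a_j)\setminus N_{C(F_i)}(a_i)=\{u_1,\dots,u_p\}$.
\item Emptiness of the first intersection gives $v_h\notin V(C(F_j))$. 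Emptiness of the second gives $u_h\notin V(C(F_i))$.
\item Choose pairwise disjoint $(k-3)$-sets $\mathbf{w}_h\subseteq V\setminus(V(F_i)\cup V(F_j))$. Then $v_h\mathbf{w}_h$ is a $(k-2)$-set outside $V(C(F_j))$, and $a_ju_h\in E(C(F_j))$ with $F_j\subseteq H_\Gamma$. So Observation~\ref{obs:almostfulldegree} gives $a_ju_hv_h\mathbf{w}_h\in E(H_{\Gamma+2})$.
\item Read this edge as the pair $a_jv_h$ together with the $(k-2)$-set $u_h\mathbf{w}_h$. It is then an extension of the new center edge $a_jv_h$, and it avoids $V(C)$ because $u_h\notin V(C(F_i))\cup\{a_j\}$.
\end{itemize}
Distinctness of the $u_h$, that is, the bijection $v_h\leftrightarrow u_h$ coming from the equal degrees, is what makes these forced extension sets pairwise disjoint. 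For $k=3$ this is tight, since the extension vertex of $a_jv_h$ is exactly $u_h$. The remaining edges (those avoiding $a_j$, and $a_jw$ for common neighbours $w$) are handled as in your other two cases. Choose their extension sets to avoid all $u_h$ and $\mathbf{w}_h$, and the assembly goes through.
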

	
	\begin{proof}
		Suppose to the contrary that $$N_{C(F_i)}(a_i)\cap\bigl(V(C(F_j))\setminus N_{C(F_j)}(a_j)\bigr)=\emptyset=N_{C(F_j)}(a_j)\cap\bigl(V(C(F_i))\setminus N_{C(F_i)}(a_i)\bigr)\,.$$
		Let~$u_1:=b_j$ and~$v_1:=b_i$ and recall that~$b_j\notin V(F_i)$ and~$b_i\notin V(F_j)$.
		Hence, for some~$r\in[t-1]$ and some~$p\in[r]$, we can write
		\[
		N_{C(F_j)}(a_j)=\{u_1,\ldots,u_r\}\,,\qquad
		N_{C(F_i)}(a_i)=\{v_1,\ldots,v_r\}\,,
		\]
		such that~$u_h=v_h$ for~$h\in[p+1,r]$ and $$\{u_{p+1},\dots,u_{r}\}=\{v_{p+1},\dots,v_{r}\}=N_{C(F_i)}(a_i)\cap N_{C(F_j)}(a_j)\,.$$

		 Since~$C(F_j)\in\cC(H_{\Gamma})$ and because by assumption~$\{v_1,\dots,v_p\}\cap V(C(F_j))=\emptyset$, we can utilise Observation~\ref{obs:almostfulldegree} to find pairwise disjoint~$(k-3)$-sets
		\[
		\mathbf w_h
		\subseteq V\setminus
		\Bigl(V(F_i)\cup V(F_j)\Bigr)\,,
		\]
		for each~$h\in[p]$, such that~$a_ju_h\mathbf w_hv_h\in E(H_{\Gamma+2})$.
        Moreover, because~$u_h=v_h$ for~$h\in[p+1,r]$ and~$C(F_j)\in\cC(H_{\Gamma})$, Observation~\ref{obs:almostfulldegree} allows us to find pairwise disjoint~$(k-2)$-sets $$\mathbf{x}_h\subseteq V\setminus\Big(V(F_i)\cup V(F_j)\cup\bigcup_{h\in[p]}\mathbf{w}_h\Big)\,,$$ for each~$h\in[p+1,r]$ such that~$a_jv_h\mathbf{x}_h\in E(H_{\Gamma+1})$.
    	Due to~$\{u_1,\dots,u_p\}\cap V(C(F_i))=\emptyset$, this together indeed yields~$C(F_i)[a_i\to a_j]\in\cC(H_{\Gamma+2})$, a contradiction.
	\end{proof}

    We construct an auxiliary directed graph~$D$ with vertex set~$V(D)=J$.
	For any two distinct vertices~$i,j\in J$,~$(j,i)$ forms an edge in~$D$ if~$N_{C(F_i)}(a_i)\cap \big(V(C(F_j))\setminus N_{C(F_j)}(a_j)\big)$ is empty and there is some vertex
    \begin{align}\label{eq:meaningof(i,j)}
        x\in N_{C(F_j)}(a_j)\cap \big(V(C(F_i))\setminus N_{C(F_i)}(a_i)\big)\,.
    \end{align}
    If both~$N_{C(F_i)}(a_i)\cap \big(V(C(F_j))\setminus N_{C(F_j)}(a_j)\big)$ and~$N_{C(F_j)}(a_j)\cap \big(V(C(F_i))\setminus N_{C(F_i)}(a_i)\big)$ are non-empty, we arbitrarily choose one of~$(i,j)$ and~$(j,i)$ to be an edge.
    Further, let~$D^{\star}$ be the graph on~$V(D)$ in which~$ij$ forms an edge if~$(i,j)$ or~$(j,i)$ is an edge in~$D$.
    Note that Lemma~\ref{lem:cannotreplace} implies that if~$ij\notin E(D^{\star})$, then~$C(F_i)[a_i \to a_j]\in\cC(H_{\Gamma+2})$.
    Therefore, if in~$D^{\star}$ there are at least~$c$ edges missing at some~$i\in V(D)$, then by Observation~\ref{obs:manyalternatives+good},~$b_i$ is~$(\Gamma+2)$-good.
    Hence, by Lemma~\ref{lem:good+6}, if there are at least~$s$ such~$i\in V(D)$, it follows that~$\iota\leq\Gamma+8$ and we are done.
    In the following, we complete the proof by deriving a contradiction if the opposite holds.
    
    So now assume that the number of~$i\in V(D)$ with~$d_{D^{\star}}(i)\leq \Gamma_1-c$ is less than~$s$.
    Notice that in this case, we have $$\vert E(D^{\star})\vert\geq\binom{\Gamma_1}{2}-\big((s-1)\Gamma_1 + \Gamma_1(c-1)\big) \geq \binom{\Gamma_1}{2}-(s+c)\Gamma_1\,.$$
    
	By Turán’s theorem, this implies the existence of a~$K_{\Gamma_2}$ in~$D^{\star}$, that is, a tournament~$D_1\subseteq D$ on~$\Gamma_2$ vertices.
    Choose some~$i_1\in V(D_1)$ with~$d^{-}_{D_1}(i_1)\geq \frac{\Gamma_2}{3}$, which is possible by averaging.
    By~\eqref{eq:meaningof(i,j)}, for each~$j\in N^{-}_{D_1}(i_1)$, there is some~$x\in N_{C(F_j)}(a_j)\cap \big(V(C(F_{i_1}))\setminus N_{C(F_{i_1})}(a_{i_1})\big)$.
    Hence, since there are at most~$t$ possibilities for~$x$, there are~$x_1\in V(F_{i_1})$ and a set~$I_1\subseteq N^{-}_{D_1}(i_1)$ with~$\vert I_1\vert\geq\frac{\Gamma_2}{3t}$ such that~$x_1\in N_{C(F_j)}(a_j)\cap \big(V(C(F_{i_1}))\setminus N_{C(F_{i_1})}(a_{i_1})\big)$ for all~$j\in I_1$.
    Now assume that for some~$\alpha\in[s-1]$ we have found~$i_1,\dots,i_{\alpha}\in V(D_1)$,~$x_1,\dots,x_{\alpha}\in V(H)$ and sets~$I_1,\dots,I_{\alpha}\subseteq V(D_1)$ with the following properties
    \begin{enumerate}
        \item\label{it:nested} $i_{\beta+1}\in I_{\beta}$ and~$I_{\beta+1}\subseteq I_{\beta}$ for all~$\beta\in[\alpha-1]$,
        \item\label{it:size indeg} $\vert I_{\beta}\vert\geq\frac{\Gamma_2}{(3t)^{\beta}}$ for all~$\beta\in[\alpha]$, and
        \item\label{it:vertexcontainment} $x_{\beta}\in N_{C(F_j)}(a_j)\cap\big(V(C(F_{i_{\beta}}))\setminus N_{C(F_{i_{\beta}})}(a_{i_{\beta}})\big)$ for all~$\beta\in[\alpha]$ and~$j\in I_{\beta}$.
    \end{enumerate}

    Since~$D_1$ is a tournament, averaging yields some~$i_{\alpha+1}\in I_{\alpha}$ with~$\vert N_{D_1}^-(i_{\alpha+1})\cap I_{\alpha}\vert\geq\frac{\vert I_{\alpha}\vert}{3}\geq\frac{\Gamma_2}{3(3t)^{\alpha}}$.
    As above, by~\eqref{eq:meaningof(i,j)} and averaging there is some~$x_{\alpha+1}$ and a set~$I_{\alpha+1}\subseteq N_{D_1}^-(i_{\alpha+1})\cap I_{\alpha}$ with~$\vert I_{\alpha+1}\vert\geq\frac{\Gamma_2}{(3t)^{\alpha+1}}$ such that for all~$j\in I_{\alpha+1}$ $$x_{\alpha+1}\in N_{C(F_j)}(a_j)\cap\big(V(C(F_{i_{\alpha+1}}))\setminus N_{C(F_{i_{\alpha+1}})}(a_{i_{\alpha+1}})\big)\,.$$
    Considering~\eqref{eq:hierarchy}, we thus find~$i_1,\dots,i_s$,~$x_1,\dots,x_s$, and~$I_1,\dots,I_s$ with the above properties.

    Note that properties~\eqref{it:nested} and~\eqref{it:vertexcontainment} imply that all the~$x_{\beta}$ are distinct.
    This is a contradiction since by~\eqref{it:vertexcontainment} (and~\eqref{it:nested}) $a_{i_s}x_{\beta}\in E(C(F_{i_s}))$, for all~$\beta\in[s-1]$, but~$\vert V(C(F_{i_s}))\vert=t\ll s$.
    \qed
	
	\section{Proof of Proposition~\ref{prop:not absolute constant lower bound}}\label{sec:prop}
	
	Theorem~\ref{thm:main result} shows that for any graph~$G$ and~$n\in\mathds{N}$ the maximum running time~$M_{F^{(k)}(G)}(n)$ is bounded by a constant that depends only on~$G$ and~$k$.
    For some special classes of graphs~$G$ (for instance paths and cycles), there are absolute constants bounding the asymptotics of~$M_{F^{(k)}(G)}(n)$.
    However, Proposition~\ref{prop:not absolute constant lower bound} states that there are some graphs~$G$ for which~$M_{F^{(k)}(G)}(n)$ is at least linear in~$\vert V(G)\vert$.
    The construction that yields the bound in Proposition~\ref{prop:not absolute constant lower bound} is a direct generalisation of a construction in~\cite{FMS:25}, which provides a lower bound on~$M_T(n)$ for~$T$ being a ($2$-uniform) star.
	
	\noindent{\bf Proof of Proposition~\ref{prop:not absolute constant lower bound}.}
    Given integers~$k\geq3$ and~$t\geq 3$, let~$T$ be~$K_{1,t-1}$, i.e., the star on~$t$ vertices,~$F^{(k)}(T)$ its~$k$-extension, and denote the unique vertex of degree~$t-1$ in~$F^{(k)}(T)$ by~$c$.
    We need to show that for every large enough~$n\in\mathds{N}$ there is some~$H$ such that the~$F^{(k)}(T)$-process starting with~$H$ has running time at least~$t-1$.
    
    For every~$h\in[t-2]$ let~$F_h$ be a copy of~$F^{(k)}(K_{1,h})$ (on a distinct vertex set) and denote the vertex of degree~$h$ by~$v_h$.
	Given some large~$n\in\mathds{N}$, let~$H$ be the~$k$-graph on~$n$ vertices which is the disjoint union of the~$F_h$, for all~$h\in[t-2]$, and a set~$W$ of~$n-\sum_{h=1}^{t-2}\big(h+1+h(k-2)\big)$ isolated vertices.
	Setting~$H_0=H$, we claim that the~$F^{(k)}(T)$-process~$(H_i)_{i \ge 0}$ runs for at least~$t-1$ steps.
    
    Given~$u\in V(H)$ and~$i\in\mathds{N}_{\geq0}$, let~$h_i(u)$ denote the largest~$h$ such that~$u$ plays the r\^ole of the vertex of degree~$h$ in some copy of~$F^{(k)}(K_{1,h})$ in~$H_i$.
    By induction on~$i$ we show that for every~$i\in[0,t-3]$ we have the following properties
    \begin{enumerate}
        \item\label{it:nottoomany} every~$e\in E(H_i)\setminus E(H_0)$ contains one of~$v_{t-i-1},\dots,v_{t-2}$,
        \item\label{it:nottoofew} $v_jv_{\ell}\mathbf{w}\in E(H_i)$, for all~$j\in[t-1-i,t-2]$,~$\ell\in[t-2]\setminus\{j\}$, and~$\mathbf{w}\in W^{(k-2)}$, and
        \item\label{it:nottoofew2} $v_j\mathbf{w}\in E(H_i)$, for all~$j\in[t-1-i,t-2]$ and~$\mathbf{w}\in W^{(k-1)}$\,.
    \end{enumerate}
    Clearly, these properties hold for~$i=0$.
    Given~$i\in[t-3]$ assume that the above properties hold for $i-1$.
    Note that~\eqref{it:nottoofew} for~$i-1$ and the definition of~$H$ entail that for~$\mathbf{w}\in W^{(k-2)}$ there is a copy~$F$ of~$F^{(k)}(K_{1,t-2})$ in~$H_{i-1}$, with~$v_{t-1-i}$ playing the r\^ole of the vertex of degree~$t-2$, such that~$V(F)\cap\mathbf{w}=\emptyset$.
    Indeed, using that~\eqref{it:nottoofew} holds for~$i-1$,~$v_{t-i},\dots,v_{t-2}$ can be added as new leaves to the star that forms the center of~$F_{t-1-i}$ in such a way that the resulting copy is vertex-disjoint to~$\mathbf{w}$ because~$n$ is large enough.
    Consequently, the addition of any~$k$-set of the form~$v_{t-1-i}v_{\ell}\mathbf{w}$ with~$\ell\in[t-2-i]$ and~$\mathbf{w}\in W^{(k-2)}$ as an edge to~$H_{i-1}$ creates a new copy of~$F^{(k)}(T)$.
    Together with~\eqref{it:nottoofew} for~$i-1$, this implies that~\eqref{it:nottoofew} holds for~$i$.
    Since also the addition of any~$k$-set of the form~$v_{t-1-i}\mathbf{w}$ for~$\mathbf{w}\in W^{(k-1)}$ as an edge to~$H_{i-1}$ creates a new copy of~$F^{(k)}(T)$, we similarly obtain~\eqref{it:nottoofew2}.

    By~\eqref{it:nottoomany} for~$i-1$, every edge in~$E(H_{i-1})\setminus E(H_0)$ contains one of~$v_{t-i},\dots,v_{t-2}$. 
    Further, in~$F^{(k)}(K_{1,h})$ every vertex but one is contained in only one edge.
    Therefore,~$h_{i-1}(v)\leq h_0(v)+i-1\leq t-3$ for all~$v\in V(H)\setminus\{v_{t-i-1},\dots,v_{t-2}\}$.
    Hence, every edge in~$E(H_i)\setminus E(H_0)$ contains one of~$v_{t-i-1},\dots,v_{t-2}$.
    This finishes the induction.

    Now note that~\eqref{it:nottoofew} and the argument for the induction step imply that there is a copy of~$F^{(k)}(K_{1,t-2})$ in~$H_{t-3}$ with~$v_1$ playing the r\^ole of the vertex of degree~$t-2$.
    Hence, as above we get~$v_1\mathbf{w}\in E(H_{t-2})$ for all~$\mathbf{w}\in W^{(k-1)}$.
    Combined with~\eqref{it:nottoofew2} and~$n$ being large this yields~$h_{t-2}(w)\geq t-2$ for all~$w\in W$, whereby~$W^{(k)}\subseteq E(H_{t-1})$.
    
    On the other hand, note that~\eqref{it:nottoomany} guarantees~$h_{t-3}(w)\leq t-3$ for every~$w\in W$.
    Thus, any edge in~$E(H_{t-2})\setminus E(H_{t-3})$ must contain a vertex in~$V(H)\setminus W$.
    Together with~\eqref{it:nottoomany}, this entails that~$W^{(k)}\cap E(H_{t-2})=\emptyset$.
    Combining this with the above, we get~$E(H_{t-1})\setminus E(H_{t-2})\neq\emptyset$ as desired.
	\qed

	\section*{Acknowledgements}
    We thank David Fabian, Gal Kronenberg, Patrick Morris, Jonathan Noel, and Tibor Szab\'o for helpful comments.
	Research was supported by the Postdoctoral Fellowship Program of CPSF under Grant Number GZC20252020, the China Postdoctoral Science Foundation 2025M783118 (Weichan Liu), and the Young Scientist Fellowship IBS-R029-Y7 (Bjarne Sch\"ulke).


\begin{bibdiv}
		\begin{biblist}

            \bib{BBM:12}{article}{
				title={Graph bootstrap percolation},
				author={Balogh, J{\'o}zsef},
				author={Bollob{\'a}s, B{\'e}la},
				author={Morris, Robert},
				journal={Random Structures \& Algorithms},
				volume={41},
				number={4},
				pages={413--440},
				year={2012},
				publisher={Wiley Online Library}
			}

            \bib{BKPS:19}{article}{
				title={The maximum length of ${K}_r $-{B}ootstrap {P}ercolation},
				author={Balogh, J{\'o}zsef},
				author={Kronenberg, Gal},
				author={Pokrovskiy, Alexey},
				author={Szab{\'o}, Tibor},
				journal={arXiv preprint arXiv:1907.04559},
				year={2019},
				pages={To appear in Proceedings of the American Mathematical Society}
			}
			
			\bib{B:68}{article}{
				author={Bollob\'as, B\'ela},
				title={Weakly $k$-saturated graphs},
				conference={
					title={Beitr\"age zur Graphentheorie},
					address={Kolloquium, Manebach},
					date={1967},
				},
				book={
					publisher={B. G. Teubner Verlagsgesellschaft, Leipzig},
				},
				date={1968},
				pages={25--31},
			}

            % \bib{BPRS:17}{article}{
            %    author={Bollob\'as, B\'ela},
            %    author={Przykucki, Micha\l},
            %    author={Riordan, Oliver},
            %    author={Sahasrabudhe, Julian},
            %    title={On the maximum running time in graph bootstrap percolation},
            %    journal={Electronic Journal of Combinatorics},
            %    volume={24},
            %    date={2017},
            %    number={2},
            %    pages={Paper No. 2.16, 20},
            % }

            \bib{BPRS:17}{article}{
				title={On the maximum running time in graph bootstrap percolation},
				author={Bollob\'as, B\'ela},
               author={Przykucki, Micha\l},
               author={Riordan, Oliver},
               author={Sahasrabudhe, Julian},
				journal={Electronic Journal of Combinatorics},
				volume={24},
				number={2},
				year={2017},
				pages={P2.16},
				publisher={Electronic Journal of Combinatorics}
			}
			
			
			\bib{CLR:79}{article}{
				title={Bootstrap percolation on a {B}ethe lattice},
				author={Chalupa, John},
				author={Leath, Paul L.},  
				author={Reich, Gary R.},
				journal={Journal of Physics C: Solid State Physics},
				volume={12},
				number={1},
				pages={L31},
				year={1979},
				publisher={IOP Publishing}
			}

            \bib{EJKL:22}{article}{
				title={Long running time for hypergraph bootstrap percolation},
				journal={arXiv:2209.02015},
				author={Espuny D\'iaz, Alberto},
				author={Janzer, Barnabás},
				author={Kronenberg, Gal},
				author={Lada, Joanna},
				year={2022},
				pages={To appear in European Journal of Combinatorics},
			}
			
			
			% \bib{FMS:22}{article}{
			% 	title={Maximum running times for graph bootstrap percolation processes},
			% 	author={Fabian, David},
			% 	author={Morris, Patrick},
			% 	author={Szab{\'o}, Tibor},
			% 	journal={Discrete Mathematics Days 2022},
			% 	volume={263},
			% 	pages={134},
			% 	year={2022},
			% 	publisher={Ed. Universidad de Cantabria}
			% }
			
			\bib{FMS:23}{article}{
				title={Slow graph bootstrap percolation I: Cycles}, 
				author={Fabian, David},
				author={Morris, Patrick},
				author={Szab{\'o}, Tibor},
				year={2023},
				journal={arXiv:2308.00498}, 
			}
			
			\bib{FMS:25}{article}{
				title={Slow graph bootstrap percolation II: Accelerating properties}, 
				author={Fabian, David},
				author={Morris, Patrick},
				author={Szab{\'o}, Tibor},
				journal={Journal of Combinatorial Theory, Series B},
				volume={172},
				pages={44--79},
				year={2025},
			}
			
			% \bib{FMS:2508}{article}{
			% 	title={Slow graph bootstrap percolation III: Chain constructions}, 
			% 	author={Fabian, David},
			% 	author={Morris, Patrick},
			% 	author={Szab{\'o}, Tibor},
			% 	year={2025},
			% 	journal={arXiv:2508.03835},
			% }

            \bib{FMS:26}{article}{
                  title={Graph bootstrap percolation -- a discovery of slowness}, 
                  author={Fabian, David},
				  author={Morris, Patrick},
				  author={Szab{\'o}, Tibor},
                  year={2026},
                  journal={arXiv:2602.12736},
            }
			
			\bib{GKP:17}{article}{
				title={The time of graph bootstrap percolation},
				author={Gunderson, Karen},
				author={Koch, Sebastian},
				author={Przykucki, Micha{\l}},
				journal={Random Structures \& Algorithms},
				volume={51},
				number={1},
				pages={143--168},
				year={2017},
				publisher={Wiley Online Library}
			}

            \bib{HL:22}{article}{
				title = {The maximal running time of hypergraph bootstrap percolation},
				journal = {arXiv preprint arXiv:2208.13489},
				author = {Hartarsky, Ivailo},
				author = {Lichev, Lyuben},
				year = {2022},
				pages={To appear in SIAM Journal on Discrete Mathematics}
			}
            
			\bib{M:15}{article}{
				title={The saturation time of graph bootstrap percolation},
				author={Matzke, Kilian},
				journal={arXiv:1510.06156},
				year={2015}
			}

            \bib{M:17}{article}{
				title={Bootstrap percolation, and other automata},
				author={Morris, Robert},
				journal={European Journal of Combinatorics},
				volume={66},
				pages={250--263},
				year={2017},
				publisher={Elsevier}
			}

            \bib{M:06}{article}{
               author={Mubayi, Dhruv},
               title={A hypergraph extension of Tur\'an's theorem},
               journal={J. Combin. Theory Ser. B},
               volume={96},
               date={2006},
               number={1},
               pages={122--134},
               issn={0095-8956},
            }
			
			
			\bib{NR:22}{article}{
				title={On the {Running} {Time} of {Hypergraph} {Bootstrap} {Percolation}},
				journal = {arXiv preprint arXiv:2206.02940},
				pages = {To appear in Electronic Journal of Combinatorics},
				author = {Noel, Jonathan A.},
				author = {Ranganathan, Arjun},
				year = {2022},
				
			}
			
			
			
			
			
		\end{biblist}
	\end{bibdiv}
\end{document}